\documentclass[11pt,a4paper,reqno]{amsart}
\usepackage{amsmath,amssymb,amsfonts,amsthm,mathtools}
\usepackage{graphicx}
\usepackage[hidelinks]{hyperref}
\usepackage{enumitem}
\usepackage{booktabs,longtable}
\usepackage{xcolor}
\usepackage[margin=1.05in]{geometry}
\usepackage{microtype}

\newtheorem{theorem}{Theorem}[section]
\newtheorem{proposition}[theorem]{Proposition}
\newtheorem{lemma}[theorem]{Lemma}
\newtheorem{corollary}[theorem]{Corollary}

\newtheorem{problem}[theorem]{Problem}
\theoremstyle{remark}
\newtheorem{remark}[theorem]{Remark}

\newcommand{\C}{\mathbb C}
\newcommand{\D}{\mathbb D}
\newcommand{\T}{\mathbb T}
\newcommand{\Spec}{\operatorname{Spec}}
\newcommand{\disc}{\operatorname{disc}}
\newcommand{\Th}{\Theta}

\newcommand{\qpoch}[2]{(#1;#2)_\infty}
\newcommand{\tr}{\mathrm{tr}}

\newcommand{\maybeincludegraphics}[2][]{%
\IfFileExists{#2}{\includegraphics[#1]{#2}}{%
\fbox{\begin{minipage}[c][0.23\textheight][c]{0.38\textwidth}\centering\small Missing external figure:\\ \texttt{#2}\end{minipage}}}}

\title[Complex spectrum of the partial theta function]{Complex spectrum of the partial theta function}
\author{Boris Shapiro}
\address{Department of Mathematics,
   Stockholm University, Stockholm, S-10691, Sweden}
\email{shapiro@math.su.se}
\subjclass[2020]{Primary 33D15; Secondary 30D15, 30E15, 11F27}
\keywords{partial theta function, spectrum, double zeros, truncations, Jensen polynomials, monodromy, roots of unity, Jacobi triple product}
\date{\today}

\begin{document}
\begin{abstract}
We study the complex spectrum of the partial theta function
\[
        \Theta(q,x)=\sum_{j=0}^{\infty}q^{j(j+1)/2}x^j,
        \qquad |q|<1,
\]
where a spectral value is a parameter for which \(\Theta(q,\cdot)\) has a multiple zero.  Since the function is defined here only for \(|q|<1\), all spectral values are strictly inside the unit disk; boundary points on \(|q|=1\) occur only as accumulation points of the spectrum.  The paper combines two complementary points of view.  Near the unit circle we prove that every point of \(|q|=1\) is an accumulation point of the spectrum; the proof uses explicit spectral factors of truncations, the Jacobi triple product, and a boundary-window lifting argument near roots of unity.  Inside a fixed subdisk, illustrated for \(|q|\leq 0.8\), the true spectrum is locally finite and must be separated carefully from the much larger branch loci of truncations and Jensen polynomials.  We give a truncation-seeded Newton procedure which produces a discrete list of candidate spectral values, explain the caustic/escaping-root mechanism in finite approximants, and record numerical monodromy experiments using a radial convention: for a spectral point \(q_*\), roots are labelled at the point \(0.1q_*/|q_*|\) on the small circle and then continued along the straight radial segment to \(q_*\).  This convention gives a coherent set of collision labels in the disk, treats negative real spectral values from the base point \(-0.1\), and leads to a preliminary rational-direction heuristic for radial monodromy.
\end{abstract}
\maketitle

\section{Introduction}
Let
\[
        \Theta(q,x):=\sum_{j=0}^{\infty}q^{j(j+1)/2}x^j,
        \qquad |q|<1,
\]
and define its complex spectrum by
\[
        \Spec(\Theta):=\{q\in\D:\ \exists x\in\C\text{ such that }
        \Theta(q,x)=\partial_x\Theta(q,x)=0\}.
\]
Thus, by definition, \(\Spec(\Theta)\subset\D\).  Whenever we speak about spectral behaviour on the unit circle, this means behaviour of the closure of the spectrum in the closed disk, not the existence of spectral parameters with \(|q|=1\).
Equivalently, \(\Spec(\Theta)\) is the branch locus of the analytic curve \(\Theta(q,x)=0\) under projection to the \(q\)-plane.  The partial theta function is the one-sided analogue of the Jacobi theta function: the quadratic exponent is retained, but the summation is restricted to non-negative indices.  This asymmetry destroys the modular symmetry of the full theta series and produces a rich zero geometry.  Partial theta functions occur in \(q\)-series, questions on roots of formal power series, asymptotic problems in combinatorics, and real-rootedness questions of Hardy--Petrovitch--Hutchinson type; see Sokal's lectures \cite{SokalNotes}, Edrei--Saff--Varga \cite{EdreiSaffVarga}, and Kostov--Shapiro \cite{KostovShapiro}.  Kostov's subsequent work on the positive and complex spectra and on double zeros shows that the spectral set has a global asymptotic structure \cite{KostovAsymptotics,KostovSpectrum,KostovDoubleZeros}.

There are two apparently different numerical phenomena.  First, spectra of truncations and Jensen polynomials show many points near the unit circle and suggest boundary accumulation.  Second, in a compact subdisk such as \(|q|\leq0.8\), the true spectrum is discrete; raw finite discriminant plots show arcs and caustics which should not be confused with arcs of genuine spectral values.  These observations are compatible rather than contradictory.  The spectrum is locally finite in \(\D\), but may have every boundary point of \(\D\) as an accumulation point.  Thus one must distinguish the boundary asymptotic problem from the interior approximation problem.

For the degree \(n\) truncation
\[
        \Theta_n(q,x):=\sum_{j=0}^{n}q^{j(j+1)/2}x^j
\]
we write
\[
        \Spec_n:=\{q\in\C^*:\ \disc_x\Theta_n(q,x)=0\}.
\]
Jensen polynomials provide another finite-dimensional approximation; they are useful, but their branch loci contain many points produced by double roots escaping to infinity in the \(x\)-plane.  The same caution applies to ordinary truncations.

\begin{figure}[h]
\centering
\maybeincludegraphics[scale=0.35]{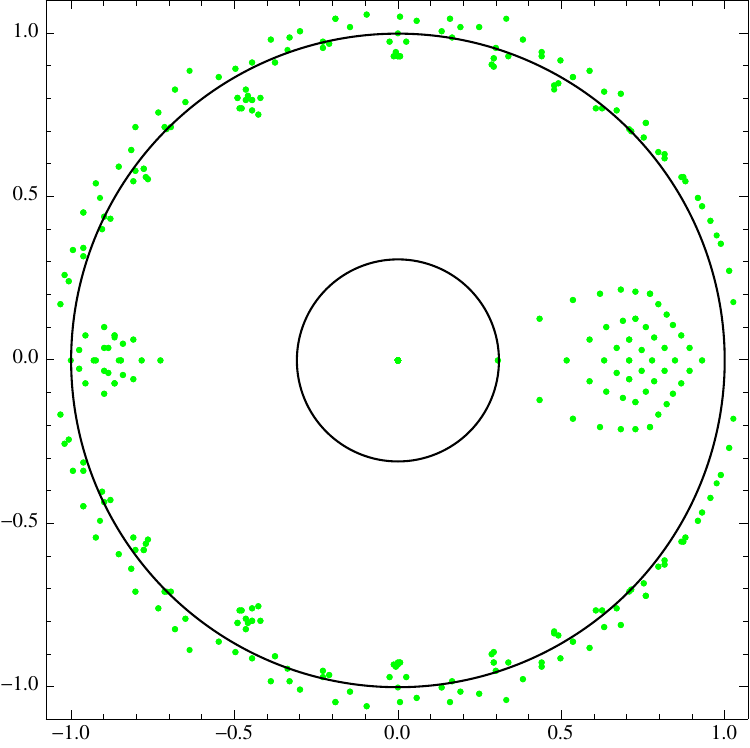}\hspace{1cm}
\maybeincludegraphics[scale=0.20]{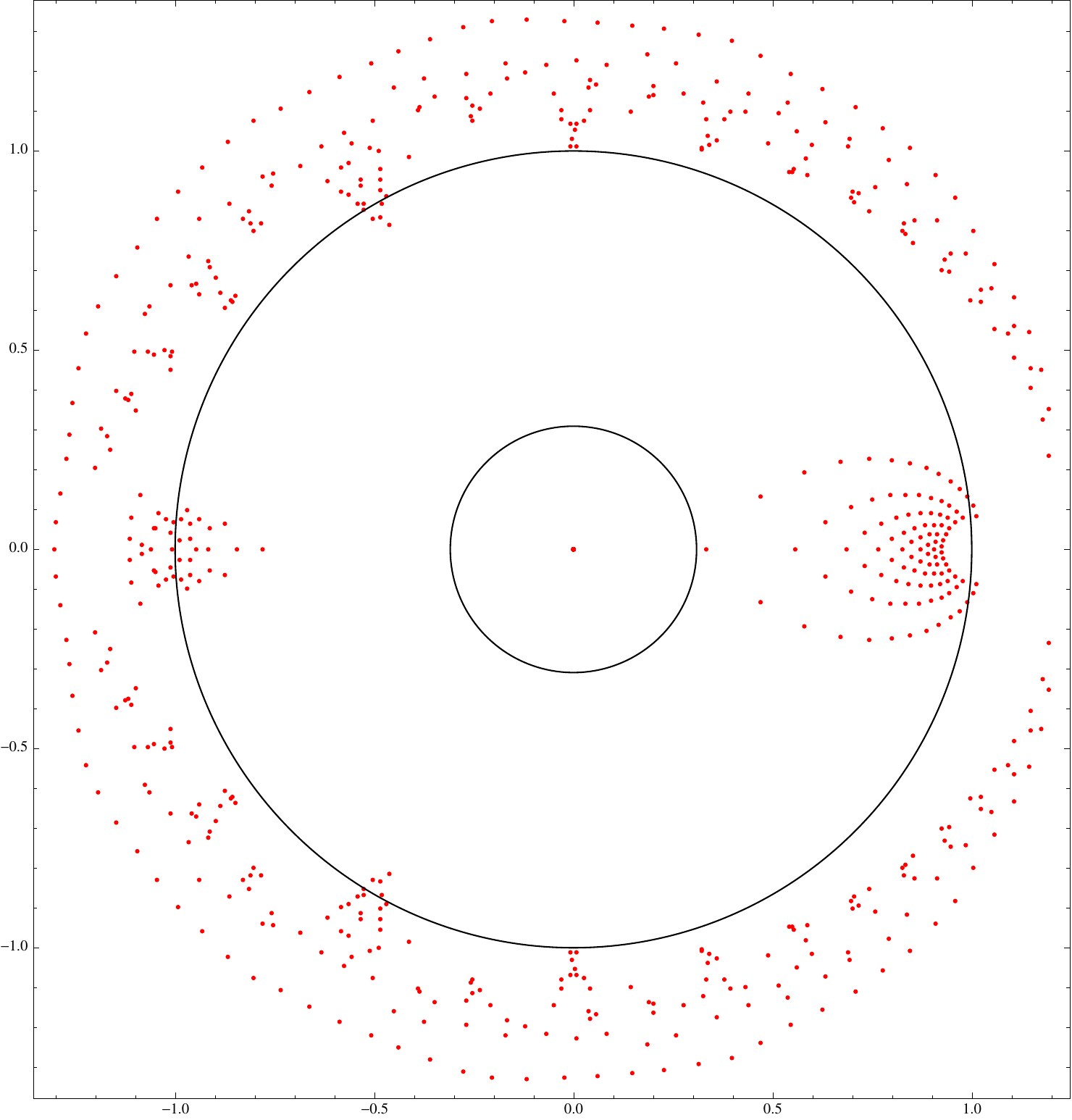}
\caption{Branch points for a degree-15 truncation of \(\Theta\) (left) and for the corresponding Jensen polynomial (right).  These pictures show finite approximation geometry; points in such plots are genuine spectral approximants only when the associated double root in the \(x\)-plane remains controlled.}
\label{fig:degree15-spectra}
\end{figure}

This figure should be read only as a picture of finite branch loci, not as a reliable approximation to the actual spectrum in the open unit disk.  In particular, the arc-like clusters visible in the finite truncation and Jensen pictures need not survive as arcs of genuine spectral values.  The later interior computations in Sections~10--12 use a stricter test: a finite branch point is treated as a genuine spectral candidate only after Newton refinement of the infinite system and after checking that the associated double root in the \(x\)-plane remains bounded.  Figure~\ref{fig:degree15-spectra} is therefore motivational for the boundary and caustic phenomena, while Figures~\ref{fig:candidates-r08} and~\ref{fig:trunc-vs-candidates} give the more trustworthy approximation to the actual spectrum in \(|q|\leq0.8\).

\paragraph{Structure of the paper.}
Sections~2--7 contain the asymptotic boundary result.  The key finite-dimensional input is an explicit spectral factor for odd truncations whose zeros equidistribute on the unit circle.  The passage to the infinite function uses a moving-window formulation and a root-of-unity lifting argument.  Sections~8--17 discuss the interior problem: bounded-root criteria, even-degree truncation factorization, a numerical list in \(|q|\leq0.8\), monodromy of the first and second spectral layers, negative-real and rational-ray monodromy heuristics, possible rational-angle collections, and the compatibility of the boundary and interior pictures.

\section{Basic convergence and finite approximants}
This section records the parts of Forsg\aa rd's notes which are used later, see \cite{ForsgardNotes}.  We give complete proofs, with a slightly more elementary Rouch\'e-type proof of the outer localization statement.

\subsection{Uniform convergence of truncations}
Set
\[
        p^{\Theta}_n(q,z)=\sum_{j=0}^n q^{j(j+1)/2}z^j,
        \qquad
        p^F_n(q,z)=\sum_{j=0}^n\frac{q^{j(j+1)/2}}{j!}z^j,
\]
and
\[
        F(q,z)=\sum_{j=0}^{\infty}\frac{q^{j(j+1)/2}}{j!}z^j.
\]

\begin{lemma}[Uniform convergence on polydiscs]\label{lem:uniform-polydisc}
For every \(0<\delta<1\) and every \(\rho>0\), the truncations \(p^\Theta_n\) and \(p^F_n\) converge respectively to \(\Theta\) and \(F\) uniformly on
\[
        D(\delta,\rho)=\{(q,z): |q|\le\delta,\ |z|\le\rho\}.
\]
The same is true after differentiating with respect to \(z\).
\end{lemma}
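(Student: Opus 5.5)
The plan is a Weierstrass $M$-test on the compact polydisc $D(\delta,\rho)$. For $(q,z)\in D(\delta,\rho)$, each term of $\Theta$ is bounded by
\[
        |q^{j(j+1)/2}z^j|\le M_j:=\delta^{j(j+1)/2}\rho^j .
\]
The terms of $F$ obey the same bound after division by $j!$, so they are bounded by $M_j/j!\le M_j$. Hence it suffices to show $\sum_j M_j<\infty$. Once that holds, the tails satisfy
\[
        \sup_{D(\delta,\rho)}|\Theta-p^\Theta_n|\le\sum_{j>n}M_j\to0,
\]
and the same bound controls $F-p^F_n$.

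To prove summability, I will use the ratio test, or equivalently a root estimate. We have
\[
        M_{j+1}/M_j=\delta^{j+1}\rho\to0,
\]
because $0<\delta<1$. So there is $j_0$ with $\delta^{j+1}\rho\le 1/2$ for all $j\ge j_0$. Beyond $j_0$ the sequence $M_j$ is then dominated by a geometric series of ratio $1/2$, and the series converges. Put differently, $M_j^{1/j}=\delta^{(j+1)/2}\rho\to0$, so the series converges super-geometrically. The constants depend only on $\delta$ and $\rho$, which is exactly the uniformity that is needed. Each partial sum is a polynomial and hence continuous (indeed holomorphic) in $(q,z)$, so the limit functions are holomorphic on the open polydisc $|q|<1$ by the uniform limit theorem.

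For the $z$-derivatives, differentiate term by term. The $j$-th term of $\partial_z\Theta$ is bounded by $jM_{j}/\rho=j\delta^{j(j+1)/2}\rho^{j-1}$, and the analogous term for $F$ is smaller. This majorant is still summable, since the factor $j$ does not affect the ratio test: the ratio still tends to $0$. Alternatively, apply Cauchy's estimate in $z$ on the slightly larger polydisc $D(\delta,2\rho)$: uniform convergence there gives uniform convergence of $\partial_z$ on $D(\delta,\rho)$. Either route works, and I would use the direct majorant because it is self-contained.

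No step is a real obstacle. The one point to handle with care is that the bound must be uniform in $q$ over $|q|\le\delta$, and this is where $\delta<1$ is essential. At $|q|=1$ the majorant $\rho^j$ diverges for $\rho\ge1$, consistent with the paper's remark that $\Theta$ is defined only for $|q|<1$.
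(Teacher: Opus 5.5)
Your proposal is correct and follows essentially the same route as the paper: a Weierstrass $M$-test with the majorant $\delta^{j(j+1)/2}\rho^j$ (divided by $j!$ for $F$), and the extra factor $j$ for the $z$-derivative. You also justify the summability of this majorant via the ratio test, which the paper simply asserts.
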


\begin{proof}
For \((q,z)\in D(\delta,\rho)\),
\[
        |q|^{j(j+1)/2}|z|^j\le \delta^{j(j+1)/2}\rho^j,
\]
and the numerical series \(\sum_{j\ge0}\delta^{j(j+1)/2}\rho^j\) converges.  The Weierstrass test gives uniform convergence of \(p^\Theta_n\).  The proof for \(p^F_n\) is even easier because of the extra factor \(1/j!\).  Differentiation in \(z\) only inserts a factor \(j\), and
\[
        \sum_{j\ge1}j\delta^{j(j+1)/2}\rho^{j-1}<\infty,
\]
so the derivative series also converge uniformly.
\end{proof}

\begin{lemma}[Compact lifting of bounded multiple zeros]\label{lem:compact-lifting}
Let \(f_n\) be either \(p^\Theta_n\) or \(p^F_n\), and let \(f\) be the corresponding limit \(\Theta\) or \(F\).  Suppose that \((q_m,z_m)\to(q_*,z_*)\), with \(|q_*|<1\), and that \(z_m\) is a multiple zero of \(z\mapsto f_{n(m)}(q_m,z)\), where \(n(m)\to\infty\).  Then \(z_*\) is a multiple zero of \(z\mapsto f(q_*,z)\).
\end{lemma}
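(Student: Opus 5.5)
The plan is to pass to the limit in the two equations \(f_{n(m)}(q_m,z_m)=0\) and \(\partial_z f_{n(m)}(q_m,z_m)=0\), using the uniform convergence of Lemma~\ref{lem:uniform-polydisc}. First fix \(\delta\) with \(|q_*|<\delta<1\) and \(\rho>|z_*|\). Since \((q_m,z_m)\to(q_*,z_*)\), the point \((q_m,z_m)\) lies in the compact polydisc \(D(\delta,\rho)\) for all sufficiently large \(m\). On that polydisc Lemma~\ref{lem:uniform-polydisc} gives \(f_n\to f\) and \(\partial_z f_n\to\partial_z f\) uniformly.

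Next write
\[
|f(q_*,z_*)|\le |f(q_*,z_*)-f(q_m,z_m)|+|f(q_m,z_m)-f_{n(m)}(q_m,z_m)|+|f_{n(m)}(q_m,z_m)|.
\]
The last term vanishes by hypothesis. The middle term is at most \(\sup_{D(\delta,\rho)}|f-f_{n(m)}|\), which tends to \(0\) because \(n(m)\to\infty\). The first term tends to \(0\) by continuity of \(f\) on the interior of \(D(\delta,\rho)\). Here \(f\) is holomorphic in both variables there, being a uniform limit of polynomials. Hence \(f(q_*,z_*)=0\). The identical three-term estimate applied to \(\partial_z f\) and \(\partial_z f_{n(m)}\) gives \(\partial_z f(q_*,z_*)=0\). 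The derivative case uses the "differentiated" half of Lemma~\ref{lem:uniform-polydisc} together with continuity of \(\partial_z f\), which holds for the same reason.

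Finally, I would observe that \(z\mapsto f(q_*,z)\) is not identically zero, since its constant coefficient is \(1\). Therefore the simultaneous vanishing of \(f(q_*,\cdot)\) and \(\partial_z f(q_*,\cdot)\) at \(z_*\) means exactly that \(z_*\) is a zero of multiplicity at least two, that is, a multiple zero.

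There is no serious obstacle. The only point requiring care is the diagonal limit: the index \(n(m)\) and the point \((q_m,z_m)\) vary together. This is why uniform convergence on a fixed compact neighbourhood is needed rather than pointwise convergence. The hypotheses \(|q_*|<1\) and boundedness of \(z_m\) (automatic from convergence) are what make such a polydisc with \(\delta<1\) available. The lemma thus gives nothing for double roots escaping to infinity, consistent with the caveat in the introduction.
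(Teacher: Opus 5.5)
Your proposal is correct and follows essentially the same route as the paper's proof. Both place the sequence in a fixed polydisc \(D(\delta,\rho)\) with \(\delta<1\), use the uniform convergence of \(f_n\) and \(\partial_z f_n\) from Lemma~\ref{lem:uniform-polydisc} to handle the diagonal limit in \(n(m)\), and conclude by continuity of \(f\) and \(\partial_z f\). Your extra remark that \(f(q_*,\cdot)\) has constant coefficient \(1\), so the double vanishing genuinely means multiplicity at least two, is a harmless refinement that the paper leaves implicit.
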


\begin{proof}
Choose \(\delta<1\) and \(\rho>0\) such that \(|q_m|\le\delta\) and \(|z_m|\le\rho\) for all large \(m\).  Lemma~\ref{lem:uniform-polydisc} gives
\[
        \sup_{D(\delta,\rho)}|f_n-f|\to0,
        \qquad
        \sup_{D(\delta,\rho)}|\partial_z f_n-\partial_z f|\to0.
\]
Since \(f_{n(m)}(q_m,z_m)=\partial_zf_{n(m)}(q_m,z_m)=0\), we obtain
\[
        f(q_m,z_m)\to0,
        \qquad
        \partial_zf(q_m,z_m)\to0.
\]
Passing to the limit gives \(f(q_*,z_*)=\partial_zf(q_*,z_*)=0\).
\end{proof}

\begin{remark}
Lemma~\ref{lem:compact-lifting} is useful for interior compact limits.  It is not enough for the unit-circle problem, because the relevant double roots of high truncations occur at \(x\)-values whose natural scale moves with the degree.
\end{remark}

\subsection{The Jensen-polynomial identity}
For an entire function \(f(z)=\sum_{j\ge0}a_jz^j/j!\), its \(n\)-th Jensen polynomial is
\[
        g^f_n(z)=\sum_{j=0}^n\binom nj a_j\frac{z^j}{n^j}.
\]
In the present situation this gives
\[
        g^\Theta_n(q,z)=n!\sum_{j=0}^n\frac{q^{j(j+1)/2}}{(n-j)!n^j}z^j,
        \qquad
        g^F_n(q,z)=\sum_{j=0}^n\binom nj q^{j(j+1)/2}\frac{z^j}{n^j}.
\]

\begin{lemma}[Jensen--truncation identity]\label{lem:jensen-identity}
For every \(n\ge0\),
\begin{equation}\label{eq:jensen-pf-identity}
        n^n z^n g^\Theta_n(q,z^{-1})
        =n!q^{n(n+1)/2}p^F_n(q,nq^{-n-1}z).
\end{equation}
Consequently, for fixed \(q\ne0\), the polynomial \(g^\Theta_n(q,\cdot)\) has a multiple zero if and only if \(p^F_n(q,\cdot)\) has a multiple zero.
\end{lemma}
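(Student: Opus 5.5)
The identity \eqref{eq:jensen-pf-identity} is a finite coefficient comparison. I will expand both sides as polynomials in \(z\) and match them term by term, and then derive the statement about multiple zeros from the fact that reversal and rescaling of the variable preserve root multiplicities, as long as no roots are lost at \(0\) or \(\infty\).

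\emph{Step 1 (the identity).} Insert the formula for \(g^\Theta_n\) into the left-hand side:
\[
n^nz^n g^\Theta_n(q,z^{-1})=n!\sum_{j=0}^n \frac{q^{j(j+1)/2}\,n^{n-j}}{(n-j)!}\,z^{n-j}.
\]
Reindex with \(k=n-j\), which gives \(n!\sum_{k=0}^n q^{(n-k)(n-k+1)/2}\,n^k z^k/k!\). On the right-hand side, the coefficient of \(n^kz^k/k!\) is \(n!\,q^{n(n+1)/2+k(k+1)/2-(n+1)k}\). So the identity reduces to the exponent identity
\[
\tfrac{(n-k)(n-k+1)}{2}=\tfrac{n(n+1)}{2}+\tfrac{k(k+1)}{2}-(n+1)k .
\]
Both sides expand to \(\tfrac{n^2+n}{2}-nk+\tfrac{k^2-k}{2}\), so they agree. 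This step is routine bookkeeping.

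\emph{Step 2 (degrees and constant terms).} Fix \(q\neq0\). The constant term of \(g^\Theta_n(q,\cdot)\) is \(n!/n!=1\), and its leading coefficient is \(q^{n(n+1)/2}/n^n\cdot n!\neq0\). Hence \(g^\Theta_n\) has exact degree \(n\) and does not vanish at \(0\). The same holds for \(p^F_n(q,\cdot)\): its constant term is \(1\) and its top coefficient is \(q^{n(n+1)/2}/n!\neq0\).

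\emph{Step 3 (multiple zeros).} By Step 2, the reciprocal polynomial \(z^ng^\Theta_n(q,1/z)\) has exactly the roots \(1/\zeta\), where \(\zeta\) runs over the roots of \(g^\Theta_n\), with the same multiplicities. By \eqref{eq:jensen-pf-identity}, this reciprocal polynomial equals \(p^F_n(q,cz)\) times the nonzero constant \(n!q^{n(n+1)/2}\), where \(c=nq^{-n-1}\neq0\). The substitution \(z\mapsto cz\) also preserves multiplicities, since it maps the roots \(w\) of \(p^F_n\) to \(w/c\).

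Putting these together, \(g^\Theta_n(q,\cdot)\) has a root of multiplicity \(\ge2\) if and only if \(p^F_n(q,\cdot)\) does. The explicit correspondence between the roots is \(\zeta\leftrightarrow w=c/\zeta\).

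The only delicate point is Step 2. One must check that no root is created or lost at \(0\) or \(\infty\) under the reversal. This is exactly where the hypothesis \(q\neq0\) is used: it guarantees that both polynomials have full degree \(n\).
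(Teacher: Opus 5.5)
Your proof is correct and follows the same route as the paper. You verify \eqref{eq:jensen-pf-identity} by the reindexing \(j\mapsto n-j\), then use that reversal and the nonzero rescaling \(z\mapsto nq^{-n-1}z\) preserve root multiplicities, and your Step~2 (constant terms equal to \(1\), nonzero leading coefficients) spells out the paper's brief remark that no multiple zero is created or lost at the transformation point. The only quibble is that \(c=nq^{-n-1}\) vanishes for \(n=0\); there, however, both polynomials are the constant \(1\), so the claim is trivial.
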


\begin{proof}
Starting from the left-hand side before multiplying by \(n^n\), one has
\begin{align*}
 z^n g^\Theta_n(q,z^{-1})
 &=n!\sum_{j=0}^n\frac{q^{j(j+1)/2}}{(n-j)!n^j}z^{n-j}  \\
 &=n!\sum_{j=0}^n\frac{q^{(n-j)(n-j+1)/2}}{j!n^{n-j}}z^j  \\
 &=\frac{n!q^{n(n+1)/2}}{n^n}
     \sum_{j=0}^n\frac{q^{j(j+1)/2}}{j!}(nq^{-n-1}z)^j.
\end{align*}
Multiplying by \(n^n\) gives \eqref{eq:jensen-pf-identity}.  The map \(z\mapsto nq^{-n-1}z\), followed by inversion in the Jensen variable, is biholomorphic away from \(z=0\), and neither polynomial has a multiple zero created at the transformation point.  Hence multiple zeros correspond.
\end{proof}

\begin{remark}
This identity explains why the Jensen spectrum for \(\Theta\) and the truncation spectrum for \(F\) coincide in the \(q\)-plane.  It is useful for interpreting the figures, but it does not by itself solve the boundary accumulation problem for the actual spectrum of \(\Theta\).
\end{remark}

\subsection{The eta-cubed product}
Define
\[
        \psi(q)=\sum_{k=0}^{\infty}(-1)^k(2k+1)q^{k(k+1)/2}.
\]

\begin{lemma}[Jacobi product form]\label{lem:eta-cubed}
For \(|q|<1\),
\[
        \psi(q)=\prod_{\nu=1}^{\infty}(1-q^\nu)^3.
\]
\end{lemma}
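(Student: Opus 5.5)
This is Jacobi's identity for \(\eta^3\). The cleanest route derives it from the Jacobi triple product, which the paper cites as an available tool. We use the triple product in the form
\[
        \sum_{n\in\mathbb Z} q^{n(n+1)/2}z^n
        =\prod_{\nu=1}^\infty(1-q^\nu)(1+zq^\nu)(1+z^{-1}q^{\nu-1}),
        \qquad |q|<1,\ z\ne0.
\]
Substituting \(z\mapsto -z\) gives
\[
        \sum_{n\in\mathbb Z}(-1)^nq^{n(n+1)/2}z^n
        =(1-z^{-1})\prod_{\nu\ge1}(1-q^\nu)(1-zq^\nu)(1-z^{-1}q^\nu).
\]
Both sides vanish at \(z=1\). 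The plan is therefore to divide by \(1-z^{-1}\) and let \(z\to1\).

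On the product side, the quotient tends to \(\prod_\nu(1-q^\nu)^3\). This follows from locally uniform convergence of the product in \(z\) near \(1\) for fixed \(|q|<1\).

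On the series side, write \(S(z)=\sum_n(-1)^nq^{n(n+1)/2}z^n\). Since \(S(1)=0\), we have
\[
        \lim_{z\to1}\frac{S(z)}{1-z^{-1}}=S'(1)=\sum_{n\in\mathbb Z}(-1)^n n\,q^{n(n+1)/2}.
\]
Differentiating termwise is justified because the series converges locally uniformly on \(\C^*\), by the same Weierstrass-type bound as in Lemma~\ref{lem:uniform-polydisc}. To evaluate this sum, pair each index \(n\ge0\) with \(m=-n-1\), which has the same exponent \(m(m+1)/2=n(n+1)/2\). The pair contributes
\[
        (-1)^n n+(-1)^{n+1}(-n-1)=(-1)^n(2n+1).
\]
Hence \(S'(1)=\psi(q)\), and the identity follows.

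The main obstacle is foundational: whether the triple product may be quoted. The abstract cites it explicitly, so I would quote it, with a standard reference such as Andrews' proof. If it were not available, I would fall back on a self-contained argument. One considers \(P(z)=\prod(1+zq^\nu)(1+z^{-1}q^{\nu-1})\), notes the functional equation \(P(qz)=(qz)^{-1}P(z)\), and expands \(P\) as a Laurent series. This forces the coefficients to be \(c_n=c_0q^{n(n+1)/2}\). Determining \(c_0=\prod(1-q^\nu)^{-1}\) is the genuinely hard step, and I would do it via the standard specialization at \(z=i\) or Gauss's argument. The remaining limit and regrouping steps are routine. Absolute convergence makes the rearrangement of the bilateral sum into the pairs \((n,-n-1)\) legitimate.
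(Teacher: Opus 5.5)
Your proof is correct and follows the paper's argument. Both take the triple product with an explicit vanishing factor (the paper's $1-a$ is your $1-z^{-1}$ under $a=z^{-1}$), divide it out, let the variable tend to $1$, and evaluate the derivative of the bilateral series by pairing indices with equal exponents. Your pairing is $n$ with $-n-1$, the paper's is $j=k+1$ with $j=-k$. The only addition on your side is the fallback sketch proving the triple product itself, which the paper simply quotes.
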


\begin{proof}
Use Jacobi's triple product in the form
\[
 \sum_{j=-\infty}^{\infty}(-1)^j a^j q^{j(j-1)/2}
 =\prod_{\nu=1}^{\infty}(1-aq^{\nu-1})(1-a^{-1}q^\nu)(1-q^\nu).
\]
Both sides vanish at \(a=1\).  Dividing by \(1-a\) and letting \(a\to1\), the product side tends to
\[
        \prod_{\nu=1}^{\infty}(1-q^\nu)^3.
\]
On the series side the limit is
\[
        -\left.\frac{d}{da}\right|_{a=1}
        \sum_{j=-\infty}^{\infty}(-1)^j a^j q^{j(j-1)/2}
        =-\sum_{j=-\infty}^{\infty}(-1)^j j q^{j(j-1)/2}.
\]
Pairing the terms \(j=k+1\) and \(j=-k\), \(k\ge0\), gives exactly
\[
        \sum_{k=0}^{\infty}(-1)^k(2k+1)q^{k(k+1)/2}.
\]
This proves the identity.
\end{proof}

\subsection{Outer localization of truncation spectra}
The notes prove an outer localization theorem with explicit thresholds.  The following proof is a self-contained Rouch\'e version sufficient for the present paper.

\begin{lemma}[A Rouch\'e stability lemma]\label{lem:rouche-stability}
For every \(\eta>0\) there exists \(\varepsilon_0>0\) with the following property.  If
\[
        P_N(w)=1+w^N+R_N(w),
        \qquad
        R_N(w)=\sum_{j=1}^{N-1}a_jw^j,
\]
and \(\sum_{j=1}^{N-1}|a_j|<\varepsilon_0\), then all zeros of \(P_N\) are simple and each lies within distance \(\eta/N\) of a zero of \(1+w^N\).
\end{lemma}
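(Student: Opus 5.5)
The plan is a direct Rouch\'e argument on $N$ small disjoint disks around the $N$ zeros of $1+w^N$. Since the conclusion for a smaller $\eta$ implies it for a larger one, we may assume $0<\eta\le 1$. The zeros of $1+w^N$ are $\omega_k=e^{i\pi(2k+1)/N}$, $k=0,\dots,N-1$, and they lie on $\T$ with mutual distances at least $2\sin(\pi/N)\ge 4/N$ for $N\ge2$. Let $D_k=\{|w-\omega_k|<\eta/N\}$. Because $\eta/N<2/N$, these disks are pairwise disjoint. The goal is to show that $P_N$ has exactly one zero, counted with multiplicity, in each $D_k$. Since $\deg P_N=N$ with leading coefficient $1$, this accounts for all zeros, and each of them is then simple and lies within $\eta/N$ of some $\omega_k$.

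\textbf{Step 1 (estimate of $R_N$).} On $\partial D_k$ we have $|w|\le 1+\eta/N\le e^{\eta/N}$. Hence $|w|^j\le e^{\eta}$ for $1\le j\le N-1$, and so
\[
|R_N(w)|\le e^{\eta}\sum_{j=1}^{N-1}|a_j|<e^{\eta}\varepsilon_0 .
\]

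\textbf{Step 2 (uniform lower bound).} We need a constant $c(\eta)>0$, independent of $N$ and $k$, such that $|1+w^N|\ge c(\eta)$ on every $\partial D_k$. Write $w=\omega_k(1+u)$ with $|u|=\eta/N$. Then
\[
1+w^N=1-(1+u)^N,
\]
so the bound does not depend on $k$. Set $s=Nu$, so that $|s|=\eta$. Then
\[
(1+u)^N=\exp\bigl(N\log(1+s/N)\bigr)=\exp\bigl(s+O(\eta^2/N)\bigr),
\]
with a uniform constant in the $O$-term. Since $0<|s|=\eta\le1<2\pi$, the function $1-e^{s}$ has no zeros on the circle $|s|=\eta$, so $m(\eta):=\min_{|s|=\eta}|1-e^{s}|>0$. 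Consequently $|1-(1+u)^N|\ge m(\eta)/2$ for all $N\ge N_0(\eta)$.

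For each of the finitely many $N<N_0$, the continuous function $|1+w^N|$ has no zeros on $\bigcup_k\partial D_k$: its only zeros are the $\omega_j$, and the circles avoid them because $\eta/N$ is smaller than both $0$-distance and the spacing. Each such minimum is therefore positive. We take $c(\eta)$ to be the smallest of $m(\eta)/2$ and these finitely many minima.

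\textbf{Step 3 (conclusion).} Choose $\varepsilon_0=e^{-\eta}c(\eta)$. Then $|R_N|<|1+w^N|$ on each $\partial D_k$. By Rouch\'e's theorem, $P_N$ and $1+w^N$ have the same number of zeros in $D_k$, namely one. Summing over $k$ gives $N$ zeros, which is all of them, and each is simple.

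The main obstacle is the uniformity in $N$ in Step 2. It is resolved by the rescaling $s=Nu$, which turns the family of circles into one fixed circle in the $s$-plane, together with a compactness argument for the finitely many small $N$.
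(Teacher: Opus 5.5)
Your proof is correct and follows the same route as the paper. Both apply Rouch\'e's theorem on the $N$ disjoint disks of radius $\eta/N$ about the roots of $1+w^N$, using a uniform lower bound $|1+w^N|\ge c(\eta)$ on their boundaries and the estimate $|R_N|\le (1+\eta/N)^N\sum|a_j|\le e^{\eta}\sum|a_j|$. Your rescaling $s=Nu$, together with the finite check for small $N$ and the reduction to $\eta\le 1$, makes rigorous two points the paper only sketches: its ``Taylor expansion'' lower bound and its choice of $\eta$ small.
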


\begin{proof}
Choose \(\eta>0\) so small that the disks of radius \(\eta/N\) around the roots of \(1+w^N\) are pairwise disjoint for all \(N\ge2\).  On the boundary of any such disk, Taylor expansion of \(w^N\) at the corresponding simple root gives
\[
        |1+w^N|\ge c_\eta>0,
\]
with \(c_\eta\) independent of \(N\).  On the same boundary, \(|w|\le 1+\text{const}/N\), hence
\[
        |R_N(w)|\le \left(1+\frac{\text{const}}N\right)^N\sum_{j=1}^{N-1}|a_j|
        \le C_\eta\sum_{j=1}^{N-1}|a_j|.
\]
If \(\varepsilon_0<c_\eta/C_\eta\), Rouch\'e's theorem gives exactly one zero of \(P_N\), counted with multiplicity, in each disk.  Since there are \(N\) disjoint disks and \(P_N\) has degree \(N\), these are all zeros.  Each disk contains one zero counted with multiplicity, so the zero is simple.
\end{proof}

\begin{theorem}[Outer localization of truncation spectra]\label{thm:outer-localization}
Let \(q_m\) be a sequence with \(|q_m|\ge1\), and suppose that \(n(m)\to\infty\) and \(q_m\in\Spec_{n(m)}\).  If \(|q_m|\) has a limit, then this limit is \(1\).  Equivalently, for every \(\varepsilon>0\) all sufficiently high truncations have no spectral values in \(|q|\ge1+\varepsilon\).
\end{theorem}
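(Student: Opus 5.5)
The idea is to rescale the \(x\)-variable so that, for \(|q|\ge1+\varepsilon\) and large \(n\), the truncation \(\Theta_n(q,\cdot)\) becomes a small perturbation of \(1+w^n\). Lemma~\ref{lem:rouche-stability} then forces all zeros to be simple. The reason to expect this is that for \(|q|>1\) the sequence \(\log|q^{j(j+1)/2}|\) is convex in \(j\). So the Newton polygon of \(\Theta_n(q,\cdot)\) has only the two vertices \(j=0\) and \(j=n\), and only the extreme terms \(1\) and \(q^{n(n+1)/2}x^n\) should matter on the circle where the zeros lie.

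Concretely, fix a branch of \(\lambda=q^{-(n+1)/2}\), so that \(\lambda^nq^{n(n+1)/2}=1\), and put \(x=\lambda w\). A direct computation gives
\[
        \Theta_n(q,\lambda w)=1+w^n+\sum_{j=1}^{n-1}a_jw^j,
        \qquad
        a_j=q^{\frac{j(j+1)}2-\frac{j(n+1)}2}=q^{-\frac{j(n-j)}2}.
\]
The sign conventions need checking, but the branch choice only multiplies each \(a_j\) by a unimodular factor, so the moduli \(|a_j|\) are what matter. With \(r=|q|\ge1+\varepsilon\) we get
\[
        \sum_{j=1}^{n-1}|a_j|\le\sum_{j=1}^{n-1}r^{-j(n-j)/2}.
\]
The key estimate is that this sum tends to \(0\) as \(n\to\infty\), uniformly in \(r\ge1+\varepsilon\). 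Using \(j(n-j)\ge \tfrac n2\min(j,n-j)\), the sum is at most
\[
        2\sum_{k\ge1}r^{-nk/4}\le \frac{2(1+\varepsilon)^{-n/4}}{1-(1+\varepsilon)^{-n/4}},
\]
and this bound goes to \(0\). Alternatively one can split into \(j\) near the ends, where \(j(n-j)\ge (n-1)\min(j,n-j)\), and the middle range, which contributes a super-exponentially small amount.

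Once this holds, fix any \(\eta\) as in Lemma~\ref{lem:rouche-stability} and take the corresponding \(\varepsilon_0\). For \(n\ge n_0(\varepsilon)\) we have \(\sum|a_j|<\varepsilon_0\), so \(w\mapsto\Theta_n(q,\lambda w)\) has only simple zeros. Since \(x=\lambda w\) is a linear change of variables with \(\lambda\neq0\), multiple zeros correspond, and the degree stays exactly \(n\) because the leading coefficient \(q^{n(n+1)/2}\ne0\). Hence \(\disc_x\Theta_n(q,\cdot)\neq0\), i.e.\ \(q\notin\Spec_n\), for all \(|q|\ge1+\varepsilon\) and \(n\ge n_0\). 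This is the second formulation of the theorem.

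The first formulation follows from the second. If \(|q_m|\to L\) with \(|q_m|\ge1\), then \(L\ge1\). If \(L>1\), take \(\varepsilon=(L-1)/2\); then \(|q_m|\ge1+\varepsilon\) for large \(m\) while \(n(m)\to\infty\), which contradicts what was just proved. So \(L=1\).

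The only real obstacle is the uniform smallness of \(\sum|a_j|\) for all \(r\ge1+\varepsilon\) simultaneously. This is handled by monotonicity in \(r\): every \(|a_j|\) is decreasing in \(r\), so it suffices to bound the sum at \(r=1+\varepsilon\). One should also confirm that the disjointness constant in Lemma~\ref{lem:rouche-stability} is independent of \(N\), which that lemma already asserts.
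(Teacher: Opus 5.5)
Your proof is correct and follows essentially the same route as the paper: you rescale by $x=q^{-(n+1)/2}w$ to write $\Theta_n(q,\cdot)$ as $1+w^n+\sum_{j=1}^{n-1}q^{-j(n-j)/2}w^j$, and then apply Lemma~\ref{lem:rouche-stability}. The only difference is in bounding the perturbation: the paper uses $j(n-j)\ge n-1$ to get $(n-1)(1+\varepsilon)^{-(n-1)/2}$, while you use $j(n-j)\ge\tfrac n2\min(j,n-j)$ to get a geometric series, and both bounds tend to $0$ uniformly for $|q|\ge1+\varepsilon$.
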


\begin{proof}
Let \(N=n(m)\).  Normalize
\[
        \Theta_N(q,z)=\sum_{j=0}^Nq^{j(j+1)/2}z^j
\]
by setting \(z=q^{-(N+1)/2}w\).  Up to a non-zero scalar factor, the normalized polynomial is
\[
        P_N(q,w)=1+w^N+
        \sum_{j=1}^{N-1}q^{j(j-N)/2}w^j.
\]
If \(|q|\ge1+\varepsilon\), then
\[
        \sum_{j=1}^{N-1}|q|^{-j(N-j)/2}
        \le (N-1)(1+\varepsilon)^{-(N-1)/2}\longrightarrow0.
\]
For all sufficiently large \(N\), Lemma~\ref{lem:rouche-stability} applies and shows that \(P_N(q,\cdot)\), and hence \(\Theta_N(q,\cdot)\), has only simple zeros.  Thus no spectral values occur in \(|q|\ge1+\varepsilon\) for high degree.  This proves the assertion.
\end{proof}

\begin{remark}
Forsg\aa rd's notes \cite{ForsgardNotes} give more explicit sufficient inequalities, separated according to parity.  The elementary proof above is less sharp but avoids the discriminant-amoeba argument and is enough for the asymptotic localization needed here.
\end{remark}

\section{An explicit spectral factor for odd truncations}
The following elementary observation is the main source of the unit circle in the finite spectral pictures.

\begin{theorem}[Central spectral factor for odd truncations]\label{thm:central-factor}
Let \(n=2m+1\), and set
\[
        \Psi_m(q):=\sum_{k=0}^{m}(-1)^k(2k+1)q^{k(k+1)/2}.
\]
If \(q\ne0\) and \(\Psi_m(q)=0\), then \(q\in\Spec_{2m+1}\).  More precisely,
\[
        x=-q^{-(m+1)}
\]
is a multiple zero of \(\Theta_{2m+1}(q,x)\).
\end{theorem}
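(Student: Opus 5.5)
The plan is a direct substitution: evaluate \(\Theta_{2m+1}\) and \(x\,\partial_x\Theta_{2m+1}\) at \(x=-q^{-(m+1)}\), and exploit the palindromic symmetry of the exponents that this choice of \(x\) creates. It is the same normalization as in the proof of Theorem~\ref{thm:outer-localization}, with \(w=-1\) and \(N=n=2m+1\).

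First I compute the general term. With \(n=2m+1\) and \(x=-q^{-(m+1)}\),
\[
 q^{j(j+1)/2}x^j=(-1)^j q^{\,j(j+1)/2-j(m+1)}=(-1)^j q^{\,j(j-n)/2}.
\]
The exponent \(j(j-n)/2\) is invariant under \(j\mapsto n-j\). Because \(n\) is odd, \((-1)^{n-j}=-(-1)^j\). The indices \(0,\dots,n\) split into the \(m+1\) pairs \(\{j,n-j\}\) with \(0\le j\le m\), and the two terms of each pair cancel. Hence \(\Theta_{2m+1}(q,x)=0\) for every \(q\ne0\), whether or not \(\Psi_m(q)=0\).

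Next I treat the derivative through \(x\,\partial_x\Theta_{2m+1}(q,x)=\sum_{j=0}^n j(-1)^jq^{j(j-n)/2}\). Pairing \(j\) with \(n-j\) as before gives
\[
 x\,\partial_x\Theta_{2m+1}=\sum_{j=0}^{m}(2j-n)(-1)^jq^{j(j-n)/2}.
\]
Reindexing by \(j=m-k\) gives \(2j-n=-(2k+1)\) and \(j(j-n)/2=-\bigl(m(m+1)-k(k+1)\bigr)/2\). Therefore
\[
 x\,\partial_x\Theta_{2m+1}(q,x)=(-1)^{m+1}q^{-m(m+1)/2}\,\Psi_m(q).
\]

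Since \(x\ne0\) and \(q\ne0\), the derivative vanishes exactly when \(\Psi_m(q)=0\). In that case \(x\) is a multiple zero of \(\Theta_{2m+1}(q,\cdot)\), so \(\disc_x\Theta_{2m+1}(q,\cdot)=0\) and \(q\in\Spec_{2m+1}\).

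There is no real obstacle here. The only step needing care is the sign and exponent bookkeeping in the reindexing, which must land exactly on \(\Psi_m\) up to a nonzero monomial factor.
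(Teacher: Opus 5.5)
Your proof is correct and is essentially the paper's argument. The paper substitutes \(x=q^{-(m+1)}y\), notes that the resulting polynomial is palindromic of odd degree and so vanishes at \(y=-1\), and then pairs \(j\) with \(2m+1-j\) and reindexes by \(k=m-j\) to get \(\partial_yQ_m(q,-1)=(-1)^mq^{-m(m+1)/2}\Psi_m(q)\); your use of \(x\partial_x\) in place of \(\partial_y\) at \(y=-1\) only flips the sign of that prefactor, which matches your \((-1)^{m+1}\).
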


\begin{proof}
Put
\[
        x=q^{-(m+1)}y,
\]
and write
\[
        Q_m(q,y):=\Theta_{2m+1}(q,q^{-(m+1)}y)
        =\sum_{j=0}^{2m+1}q^{j(j-2m-1)/2}y^j.
\]
The coefficient of \(y^j\) equals the coefficient of \(y^{2m+1-j}\), because
\[
        j(j-2m-1)=(2m+1-j)((2m+1-j)-2m-1).
\]
Thus \(Q_m\) is palindromic of odd degree.  Consequently \(Q_m(q,-1)=0\).

It remains to impose the vanishing of the derivative.  Pairing the terms with indices \(j\) and \(2m+1-j\), one obtains
\begin{align*}
        \partial_y Q_m(q,-1)
        &=\sum_{j=0}^{2m+1}j(-1)^{j-1}q^{-j(2m+1-j)/2}  \\
        &=\sum_{j=0}^{m}(-1)^j(2m+1-2j)q^{-j(2m+1-j)/2}.
\end{align*}
Now put \(k=m-j\).  Since
\[
        \frac{m(m+1)}2-\frac{j(2m+1-j)}2=\frac{k(k+1)}2,
\]
we get
\[
        \partial_y Q_m(q,-1)
        =(-1)^m q^{-m(m+1)/2}\Psi_m(q).
\]
Therefore \(Q_m(q,-1)=\partial_y Q_m(q,-1)=0\) exactly when \(\Psi_m(q)=0\).  Since \(\partial_x=q^{m+1}\partial_y\), the same condition says that \(x=-q^{-(m+1)}\) is a multiple zero of \(\Theta_{2m+1}(q,x)\).
\end{proof}

\begin{remark}
This theorem gives a concrete diagnostic for the figures: the zeros of \(\Psi_m\) must occur among the branch points plotted for \(\Theta_{2m+1}\).  Numerically they should form a nearly circular cloud around \(|q|=1\).
\end{remark}

\begin{figure}[h]
\centering
\maybeincludegraphics[width=0.55\textwidth]{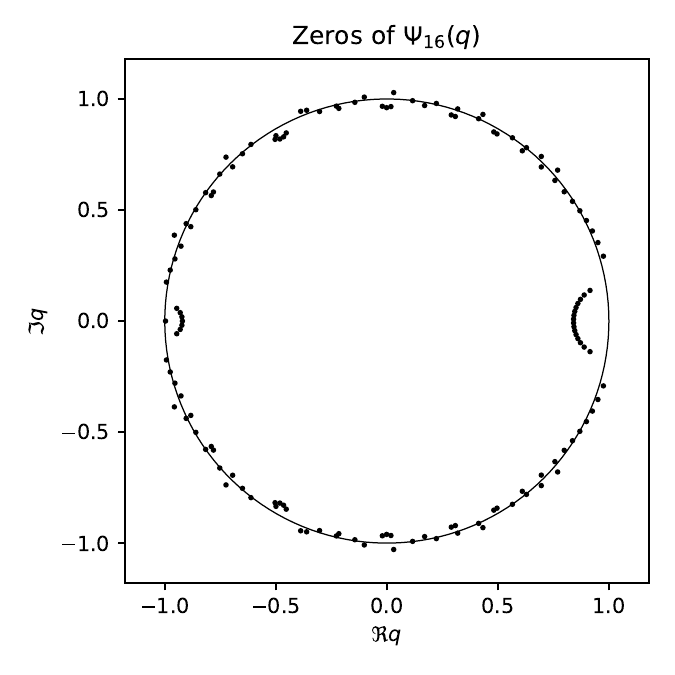}
\caption{Zeros of \(\Psi_{16}\), hence a distinguished spectral subfamily of the truncation \(\Theta_{33}\).  Theorem~\ref{thm:equidistribution} predicts convergence to Haar measure on the unit circle.}
\end{figure}

\section{Density on the unit circle for the truncation-spectral subfamily}
The factor \(\Psi_m\) is the \(m\)-th non-trivial Taylor section of the Jacobi product from Lemma~\ref{lem:eta-cubed}.  This explains why roots of unity, and hence the whole unit circle, appear in the limiting picture.

\begin{theorem}[Equidistribution of the central spectral factor]\label{thm:equidistribution}
Let \(N_m=m(m+1)/2\), and let
\[
        \nu_m:=\frac1{N_m}\sum_{\Psi_m(\alpha)=0}\delta_{\alpha},
\]
where zeros are counted with multiplicity.  Then
\[
        \nu_m\xrightarrow[m\to\infty]{w^*}\frac{d\theta}{2\pi}\quad\text{on }\T.
\]
Equivalently, for every fixed arc \(I\subset\T\) whose endpoints are irrelevant to the limiting measure,
\[
        \frac1{N_m}\#\{\alpha:\Psi_m(\alpha)=0,\ \arg\alpha\in I\}
        \longrightarrow \frac{|I|}{2\pi},
\]
and all but \(o(N_m)\) zeros satisfy \(1-\varepsilon<|\alpha|<1+\varepsilon\) for every fixed \(\varepsilon>0\).
\end{theorem}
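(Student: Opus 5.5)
The plan is to treat \(\Psi_m\) purely as a polynomial with very controlled coefficients. Equidistribution will then follow from two classical quantitative estimates: Jensen's formula for the radial concentration, and the Erd\H{o}s--Tur\'an discrepancy theorem for the angular distribution. Neither the lacunary structure of the exponents nor Lemma~\ref{lem:eta-cubed} is needed for this; the product formula only explains heuristically why the limit is supported on \(\T\).

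\emph{Setup.} \(\Psi_m\) has degree exactly \(N_m=m(m+1)/2\), constant term \(1\) and leading coefficient \((-1)^m(2m+1)\). In particular \(0\) is not a zero, so \(\nu_m\) is a probability measure on \(\C^*\). The key size bound is
\[
  L_m:=\sum_{k=0}^m(2k+1)=(m+1)^2,
  \qquad
  \max_{|q|=1}|\Psi_m(q)|\le L_m .
\]

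\emph{Step 1 (radial concentration).} Fix \(\varepsilon>0\). Apply Jensen's formula on the circle \(|q|=1\). Each zero in \(|q|\le 1-\varepsilon\) contributes at least \(-\log(1-\varepsilon)\) to \(\sum\log(1/|\alpha|)\), so
\[
  \#\{\alpha:\ |\alpha|\le1-\varepsilon\}
  \le \frac{\log\bigl(L_m/|\Psi_m(0)|\bigr)}{-\log(1-\varepsilon)}
  =O_\varepsilon(\log m).
\]
For zeros with \(|\alpha|\ge 1+\varepsilon\), apply the same argument to the reciprocal polynomial \(q^{N_m}\Psi_m(1/q)\). It has constant term of modulus \(2m+1\) and the same sup-norm bound \(L_m\) on \(\T\), which again gives \(O_\varepsilon(\log m)\) zeros. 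Both counts are \(o(N_m)\), since \(N_m\asymp m^2\).

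\emph{Step 2 (angular equidistribution).} Invoke the Erd\H{o}s--Tur\'an theorem: for \(P(z)=\sum_{j=0}^N a_jz^j\) with \(a_0a_N\ne0\) and every arc \(I\),
\[
  \Bigl|\frac{\#\{\text{zeros with }\arg\in I\}}{N}-\frac{|I|}{2\pi}\Bigr|
  \le C\sqrt{\frac1N\log\frac{\sum_j|a_j|}{\sqrt{|a_0a_N|}}} .
\]
With \(\sum_j|a_j|=(m+1)^2\) and \(|a_0a_N|=2m+1\), the right-hand side is \(O\bigl(\sqrt{\log m}/m\bigr)\to0\), uniformly in \(I\). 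This gives the arc-count statement.

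\emph{Step 3 (weak* convergence).} Take a bounded continuous test function \(\varphi\) on \(\C\). By Step 1, all but a proportion \(o(1)\) of the mass of \(\nu_m\) lies in the annulus \(1-\varepsilon<|q|<1+\varepsilon\). On that annulus, \(\varphi(q)\) differs from \(\varphi(q/|q|)\) by at most the modulus of continuity \(\omega_\varphi(\varepsilon)\) on the compact annulus. By Step 2, the pushforward of \(\nu_m\) under \(q\mapsto q/|q|\) converges to \(d\theta/2\pi\): approximate \(\varphi|_{\T}\) by step functions on arcs. Letting \(m\to\infty\) and then \(\varepsilon\to0\) yields \(\int\varphi\,d\nu_m\to\int_{\T}\varphi\,d\theta/2\pi\).

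\emph{Main obstacle.} The only non-elementary input is the Erd\H{o}s--Tur\'an inequality, which I would quote (e.g.\ in Ganelius' or Amoroso--Mignotte's form) rather than reprove. The substantive point is to check that the coefficient ratio grows only polynomially in \(m\) while the degree grows like \(m^2\). If one wished to avoid that theorem, the fallback is a potential-theoretic route. One would show \(\frac1{N_m}\log|\Psi_m(q)|\to\log^+|q|\) in \(L^1_{\mathrm{loc}}\): the upper bound comes from the coefficient estimate, and the lower bound for \(|q|<1\) from Lemma~\ref{lem:eta-cubed}, since \(\Psi_m\to\psi\ne0\) there. The limit is then read off as the Laplacian of \(\log^+|q|\), which is \(d\theta/2\pi\). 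The delicate part of that route is the lower bound for \(|q|>1\).
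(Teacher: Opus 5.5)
Your proposal is correct and follows essentially the same route as the paper: Jensen's formula applied to \(\Psi_m\) and to its reciprocal polynomial gives \(O_\varepsilon(\log m)=o(N_m)\) zeros off the annulus, and the Erd\H{o}s--Tur\'an discrepancy bound, with a logarithmic coefficient ratio of size \(O(\log m)\) against degree \(N_m\asymp m^2\), gives the angular equidistribution. The differences are cosmetic: you use the coefficient-sum form of Erd\H{o}s--Tur\'an, you do not normalize the reciprocal polynomial, and you write out the weak\(^*\) limiting step that the paper leaves implicit.
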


\begin{proof}
The polynomial \(\Psi_m\) has degree \(N_m\), constant coefficient \(1\), and leading coefficient \((-1)^m(2m+1)\).  On \(|q|=1\),
\[
        \|\Psi_m\|_{\T}\le \sum_{k=0}^m(2k+1)=(m+1)^2.
\]
Hence
\[
        \log\frac{\|\Psi_m\|_{\T}}{\sqrt{|a_0a_{N_m}|}}
        =O(\log m)=o(N_m).
\]
The Erd\H os--Tur\'an angular discrepancy theorem \cite{ErdosTuran} therefore gives uniform angular distribution of the zeros.

It remains to note that the zeros concentrate radially near \(\T\).  Jensen's formula gives, for fixed \(0<r<1\),
\[
        \#\{\alpha:\Psi_m(\alpha)=0,\ |\alpha|\le r\}
        \le \frac{\log\|\Psi_m\|_{\T}}{\log(1/r)}=O_r(\log m)=o(N_m).
\]
For the exterior, apply the same estimate to the reciprocal polynomial
\[
        q^{N_m}\Psi_m(1/q)/((-1)^m(2m+1)),
\]
whose constant coefficient is \(1\) and whose unit-circle norm is \(O(m)\).  This gives \(o(N_m)\) zeros in \(|q|\ge R\) for every fixed \(R>1\).  Radial concentration together with angular equidistribution proves the weak convergence to Haar measure on \(\T\).
\end{proof}

\begin{corollary}[Every point of the unit circle is seen by truncation spectra]
For every \(\zeta\in\T\), there exist integers \(m_s\to\infty\) and points \(q_s\in\Spec_{2m_s+1}\) such that
\[
        q_s\to\zeta.
\]
Indeed, one may choose \(q_s\) among the zeros of \(\Psi_{m_s}\).
\end{corollary}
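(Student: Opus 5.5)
The corollary follows by combining Theorem~\ref{thm:equidistribution} with Theorem~\ref{thm:central-factor}. Fix \(\zeta\in\T\) and, for each \(s\ge1\), let \(\varepsilon_s=1/s\) and \(I_s\) the open arc of \(\T\) centred at \(\zeta\) of length \(2/s\). The plan is to show that for all sufficiently large \(m\) there is a zero \(\alpha\) of \(\Psi_m\) with \(\arg\alpha\in I_s\) and \(1-\varepsilon_s<|\alpha|<1+\varepsilon_s\). Such an \(\alpha\) lies within distance \(O(1/s)\) of \(\zeta\).

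\emph{Counting step.} By the angular statement of Theorem~\ref{thm:equidistribution}, the number of zeros with argument in \(I_s\) is
\[
\frac{|I_s|}{2\pi}N_m+o(N_m)=\frac{N_m}{\pi s}+o(N_m).
\]
By the radial statement, at most \(o(N_m)\) zeros fail the condition \(1-\varepsilon_s<|\alpha|<1+\varepsilon_s\). Since \(N_m\to\infty\), for \(m\) large in terms of \(s\) the first count exceeds the second, so a zero satisfying both conditions exists. The endpoints of \(I_s\) cause no problem, because Haar measure gives them zero mass.

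\emph{Membership in the truncation spectrum.} Choose \(m_s\) increasing with \(m_s\to\infty\), each taken large enough for the counting step to apply with parameter \(s\). Let \(q_s\) be the zero of \(\Psi_{m_s}\) found above. Since \(\Psi_m(0)=1\), we have \(q_s\neq0\). Theorem~\ref{thm:central-factor} then gives \(q_s\in\Spec_{2m_s+1}\), with a multiple zero at \(x=-q_s^{-(m_s+1)}\). Finally \(|q_s-\zeta|\le |q_s-\zeta q_s/|q_s||+|\zeta q_s/|q_s|-\zeta|=O(1/s)\to0\).

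The only delicate point is ordering the quantifiers: the \(o(N_m)\) bounds are not uniform in \(\varepsilon\) or \(I\), so we fix \(s\) first and only then choose \(m\). With that order the argument is routine.
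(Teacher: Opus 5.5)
Your proposal is correct and follows the paper's intended route: the corollary has no separate proof, being justified by the fact that zeros of \(\Psi_m\) accumulate at every point of \(\T\) (Theorem~\ref{thm:equidistribution}) and lie in \(\Spec_{2m+1}\) (Theorem~\ref{thm:central-factor}). Your sector-and-annulus count, with \(s\) fixed before \(m\), is a careful unpacking of that support-of-the-limit-measure argument.
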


\begin{remark}[Asymptotic angular density]
For this explicit truncation-spectral subfamily the limiting density is
\[
        \rho(\theta)=\frac1{2\pi}.
\]
This theorem concerns a distinguished factor of the truncation discriminant.  It does not claim equidistribution for the full discriminant of \(\Theta_n\), whose degree is much larger.
\end{remark}

\section{Exact moving-window reformulation for the infinite function}
We now push the problem closer to the actual spectrum of \(\Theta\).  The following change of variables is the most useful way to see the obstruction.

For \(N\ge1\), set
\[
        x=-q^{-N}e^u.
\]
Then
\begin{align*}
\Theta(q,-q^{-N}e^u)
&=\sum_{j=0}^{\infty}(-1)^j q^{j(j+1)/2-Nj}e^{uj}.
\end{align*}
Writing \(j=N+s\), we obtain the exact identity
\begin{equation}\label{eq:moving-window}
        \Theta(q,-q^{-N}e^u)
        =(-1)^Nq^{-N(N-1)/2}e^{Nu}H_N(q,u),
\end{equation}
where
\begin{equation}\label{eq:HN-def}
        H_N(q,u)=\sum_{s=-N}^{\infty}(-1)^s q^{s(s+1)/2}e^{su}.
\end{equation}

\begin{proposition}[Double roots in the moving window]\label{prop:moving-window-double}
For \(q\ne0\) and \(x=-q^{-N}e^u\), the two equations
\[
        \Theta(q,x)=0,
        \qquad
        \partial_x\Theta(q,x)=0
\]
are equivalent to
\[
        H_N(q,u)=0,
        \qquad
        \partial_uH_N(q,u)=0.
\]
\end{proposition}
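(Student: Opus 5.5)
The plan is to read both equivalences directly off the exact identity \eqref{eq:moving-window}, using that the prefactor and the change of variables are nonvanishing and holomorphic.

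\emph{Step 1 (the function itself).} Fix \(q\neq0\) with \(|q|<1\) and put
\[
        G(u):=\Theta(q,-q^{-N}e^u).
\]
By \eqref{eq:moving-window},
\[
        G(u)=c(u)H_N(q,u),\qquad c(u)=(-1)^Nq^{-N(N-1)/2}e^{Nu}.
\]
Since \(c(u)\) never vanishes, \(G(u)=0\) if and only if \(H_N(q,u)=0\). Because \(u\mapsto -q^{-N}e^u\) maps \(\C\) onto \(\C^*\), every \(x\ne0\) arises this way. The point \(x=0\) never occurs, since \(\Theta(q,0)=1\).

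\emph{Step 2 (the derivatives).} First, \(H_N(q,\cdot)\) is entire in \(u\). Its series converges locally uniformly because \(|q|<1\) makes \(|q|^{s(s+1)/2}\) decay superexponentially, so termwise differentiation is legitimate. Then:
\begin{itemize}
\item the chain rule gives \(G'(u)=x\,\partial_x\Theta(q,x)\) with \(x=-q^{-N}e^u\neq0\);
\item differentiating the identity gives \(G'(u)=c(u)\bigl(N H_N(q,u)+\partial_uH_N(q,u)\bigr)\), using \(c'(u)=Nc(u)\).
\end{itemize}

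\emph{Step 3 (equivalence).} Suppose \(\Theta(q,x)=\partial_x\Theta(q,x)=0\). Step 1 gives \(H_N=0\). Step 2 then gives \(c\,\partial_uH_N=x\,\partial_x\Theta=0\), so \(\partial_uH_N=0\).

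Conversely, suppose \(H_N=\partial_uH_N=0\). Then \(\Theta=0\) by Step 1, and \(x\,\partial_x\Theta=G'=0\). Since \(x\neq0\), we get \(\partial_x\Theta=0\).

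No step is a real obstacle. The only points needing care are:
\begin{itemize}
\item the nonvanishing of both \(x\) and \(c(u)\);
\item the justification of termwise differentiation in Step 2;
\item the fact that \(u\) is determined only modulo \(2\pi i\). This is harmless, because the statement concerns a given \(u\) paired with its \(x\).
\end{itemize}
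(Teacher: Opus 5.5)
Your proof is correct and follows the paper's argument: the nonvanishing prefactor in \eqref{eq:moving-window} gives $\Theta=0\iff H_N=0$, and the chain rule $G'(u)=x\,\partial_x\Theta$ combined with $G'=c\,(NH_N+\partial_uH_N)$ turns the derivative condition into $\partial_uH_N=0$ once $H_N=0$. The only difference is that you spell out two points the paper leaves implicit: that $x\neq0$ is what lets you cancel $x$ in the converse direction, and that differentiating the series for $H_N$ term by term is justified.
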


\begin{proof}
The factor multiplying \(H_N\) in \eqref{eq:moving-window} is non-zero.  Thus \(\Theta(q,x)=0\) is equivalent to \(H_N(q,u)=0\).  Since \(x\partial_x=\partial_u\), the equation \(\partial_x\Theta(q,x)=0\) is equivalent to \(\partial_u\Theta(q,-q^{-N}e^u)=0\).  Differentiating \eqref{eq:moving-window} with respect to \(u\) gives a factor times \(NH_N+\partial_uH_N\).  Under the first equation \(H_N=0\), the derivative condition is exactly \(\partial_uH_N=0\).
\end{proof}

For fixed \(|q|<1\), the functions \(H_N\) converge uniformly on compact subsets in \(u\) to the bilateral theta series
\[
        H(q,u)=\sum_{s=-\infty}^{\infty}(-1)^s q^{s(s+1)/2}e^{su}.
\]
By Jacobi's triple product,
\begin{equation}\label{eq:bilateral-product}
        H(q,u)=\prod_{\nu=1}^{\infty}(1-q^\nu)(1-e^u q^\nu)(1-e^{-u}q^{\nu-1}).
\end{equation}
In particular \(H(q,0)=0\) and
\[
        \partial_uH(q,0)=\prod_{\nu=1}^{\infty}(1-q^\nu)^3=\psi(q).
\]
Thus the factor \(\Psi_m\) found above is a finite-section version of the derivative of the bilateral limiting theta function at \(u=0\).

\begin{remark}[Why compact convergence is not enough]
For fixed \(|q|<1\), the negative tail
\[
        H(q,u)-H_N(q,u)=\sum_{s=-\infty}^{-N-1}(-1)^s q^{s(s+1)/2}e^{su}
\]
is exponentially small in \(N^2\) on compact \(u\)-sets.  However, the desired boundary accumulation requires \(|q|\to1\) as \(N\to\infty\).  In that regime the estimate degenerates, and this is precisely why the truncation theorem above does not immediately lift to the actual spectrum.
\end{remark}

\section{Boundary-window formulation of the lifting problem}
The preceding section suggests that the full problem should be attacked through the critical values of \(H_N\).  Near \(u=0\) write
\[
        A_N(q)=H_N(q,0),\qquad
        B_N(q)=\partial_uH_N(q,0),\qquad
        C_N(q)=\partial_u^2H_N(q,0).
\]
The truncation calculation corresponds to replacing \(H_N\) by its symmetric finite part
\[
        H_N^{\mathrm{sym}}(q,u)=\sum_{s=-N}^{N-1}(-1)^sq^{s(s+1)/2}e^{su}.
\]
For this symmetric finite part one has
\[
        H_N^{\mathrm{sym}}(q,0)=0,
        \qquad
        \partial_uH_N^{\mathrm{sym}}(q,0)=(-1)^{N-1}q^{-N(N-1)/2}\Psi_{N-1}(q),
\]
which is Theorem~\ref{thm:central-factor} in moving-window coordinates.

For the infinite function, one should instead solve
\[
        H_N(q,u)=\partial_uH_N(q,u)=0.
\]
A natural one-variable reduction is the following.

\begin{proposition}[Critical-value reduction]\label{prop:critical-value-reduction}
Let \(\Omega_N\) be a region in the \(q\)-plane and suppose that, for \(q\in\Omega_N\), \(\partial_u^2H_N(q,0)\) is bounded away from zero while \(\partial_uH_N(q,0)\) is sufficiently small.  Then there is a unique small analytic function \(u_N(q)\) such that
\[
        \partial_uH_N(q,u_N(q))=0.
\]
In this region, \(q\in\Spec(\Theta)\) with a double zero in the window \(x=-q^{-N}e^u\) if and only if
\[
        \Phi_N(q):=H_N(q,u_N(q))=0.
\]
Moreover,
\[
        \Phi_N(q)=A_N(q)-\frac{B_N(q)^2}{2C_N(q)}+\text{higher order terms},
\]
where the higher order terms are controlled by higher \(u\)-derivatives of \(H_N\).
\end{proposition}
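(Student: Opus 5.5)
The plan is to apply the holomorphic implicit function theorem to the equation \(G(q,u):=\partial_uH_N(q,u)=0\) near \(u=0\), and then use Proposition~\ref{prop:moving-window-double} to convert double zeros of \(\Theta\) into common zeros of \(H_N\) and \(\partial_uH_N\). First I fix the quantitative meaning of the hypotheses: there are constants \(c>0\) and \(r>0\) (possibly depending on \(N\)) such that \(|C_N(q)|\ge c\) on \(\Omega_N\), \(|\partial_u^3H_N(q,u)|\le M\) for \(|u|\le r\), and \(|B_N(q)|\le \varepsilon\) with \(\varepsilon\) small relative to \(c\), \(c/M\) and \(r\). For fixed \(|q|<1\) the series \eqref{eq:HN-def} is entire in \(u\) (the positive tail converges like \(\sum|q|^{s(s+1)/2}e^{s\Re u}\), and the negative part is a finite sum). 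It is also jointly holomorphic in \((q,u)\) for \(0<|q|<1\), so the bound \(M\) comes from Cauchy estimates on a slightly larger polydisc.

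The key step is to produce the critical point \(u_N(q)\). I would write \(\partial_uH_N(q,u)=B_N+C_Nu+E(q,u)\) with \(|E|\le M|u|^2/2\). Rouch\'e's theorem on the circle \(|u|=\rho:=2\varepsilon/c\) compares this with the linear function \(B_N+C_Nu\): on \(|u|=\rho\) one has \(|B_N+C_Nu|\ge c\rho-\varepsilon=\varepsilon\), while \(|E|\le M\rho^2/2<\varepsilon\) once \(\varepsilon<c^2/(2M)\). This gives exactly one zero \(u_N(q)\) in \(|u|<\rho\), and it is simple. The explicit formula
\[
 u_N(q)=\frac1{2\pi i}\oint_{|u|=\rho}\frac{u\,\partial_u^2H_N(q,u)}{\partial_uH_N(q,u)}\,du
\]
then shows that \(u_N\) is analytic in \(q\). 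Uniqueness among small \(u\) is the Rouch\'e count, and we get \(u_N=-B_N/C_N+O(\varepsilon^2)\).

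For the equivalence, suppose \(q\in\Omega_N\) and \(x=-q^{-N}e^u\) is a double zero with \(u\) in the small disc. By Proposition~\ref{prop:moving-window-double}, \(H_N(q,u)=\partial_uH_N(q,u)=0\). Uniqueness forces \(u=u_N(q)\), so \(\Phi_N(q)=0\). Conversely, \(\Phi_N(q)=0\) together with \(\partial_uH_N(q,u_N)=0\) gives a double zero by the same proposition. Here "in the window" is interpreted as \(|u|<\rho\), and I would state this explicitly. For the expansion I Taylor-expand \(H_N(q,\cdot)\) at \(0\) and evaluate at \(u_N\):
\[
\Phi_N=A_N+B_Nu_N+\tfrac12C_Nu_N^2+O(M|u_N|^3).
\]
Substituting \(u_N=-B_N/C_N+\delta\) with \(\delta=O(MB_N^2/C_N^3)\), the linear terms in \(\delta\) cancel because \(B_N+C_N(-B_N/C_N)=0\). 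This leaves \(A_N-B_N^2/(2C_N)+O(M|B_N|^3/|C_N|^3)\), and the error involves only \(\partial_u^3H_N\) and higher derivatives, as claimed.

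The main obstacle is not the local algebra but making ``sufficiently small'' and the derivative bound \(M\) uniform on \(\Omega_N\) when \(|q|\) is close to \(1\). In that regime the Cauchy estimates for \(\partial_u^3H_N\) deteriorate. Near \(|q|\to1\) I would first bound \(M\) crudely by \(\sum_s|s|^3|q|^{s(s+1)/2}e^{|s|r}\), and accept that the admissible size of \(B_N\) then depends on \(N\) and on \(\operatorname{dist}(q,\T)\). As stated, the proposition only requires a region on which these bounds hold.
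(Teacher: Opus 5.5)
Your proposal is correct and follows the same route as the paper. You solve \(\partial_uH_N(q,u)=0\) for a small critical point \(u_N\approx -B_N/C_N\), invoke Proposition~\ref{prop:moving-window-double} for the equivalence with \(\Phi_N(q)=0\), and Taylor-expand \(H_N\) at \(u=0\). Your Rouch\'e/argument-principle step is a quantitative form of the paper's appeal to the implicit function theorem; it has the merit of making ``sufficiently small'' precise, since the paper's literal IFT ``at \(u=0\)'' would need an exact critical point there. It also yields the explicit remainder \(O(M|B_N|^3/|C_N|^3)\).
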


\begin{proof}
The existence and analyticity of \(u_N(q)\) follow from the implicit function theorem applied to \(\partial_uH_N(q,u)=0\) at \(u=0\).  Proposition~\ref{prop:moving-window-double} then gives the equivalence with \(\Phi_N(q)=0\).  The displayed expansion is the usual Taylor expansion at the critical point: to first order, \(u_N(q)=-B_N(q)/C_N(q)\), and substitution into
\[
        H_N(q,u)=A_N(q)+B_N(q)u+\frac12C_N(q)u^2+\cdots
\]
gives the stated expression.
\end{proof}

The lifting statement needed below is therefore not a conjectural assertion about every boundary point separately.  It is enough to prove it at roots of unity, because roots of unity are dense on the unit circle.  We record this elementary reduction before proving the required root-of-unity lifting in the next section.

\begin{proposition}[Boundary-window reduction]\label{prop:boundary-window-reduction}
Suppose that for every root of unity \(\omega\in\T\) there exist integers \(N_s\to\infty\), parameters \(Q_s\in\D\), and small numbers \(u_s\in\C\), such that
\[
        Q_s\to\omega,
        \qquad
        H_{N_s}(Q_s,u_s)=\partial_uH_{N_s}(Q_s,u_s)=0.
\]
Then every point of \(\T\) is an accumulation point of \(\Spec(\Theta)\).
\end{proposition}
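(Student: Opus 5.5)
The plan is first to convert each hypothesized double zero in the moving window into a genuine spectral value of \(\Theta\), and then to pass from roots of unity to arbitrary boundary points by a diagonal argument.

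\textbf{Step 1: spectral values near each root of unity.} Fix a root of unity \(\omega\), and take the data \(N_s\), \(Q_s\), \(u_s\) supplied by the hypothesis. Set
\[
        x_s:=-Q_s^{-N_s}e^{u_s}.
\]
This is a well-defined finite complex number, because \(Q_s\in\D\setminus\{0\}\); the point \(Q_s\) cannot be \(0\), since \(Q_s\to\omega\) and so \(Q_s\neq 0\) for large \(s\). Proposition~\ref{prop:moving-window-double} then turns \(H_{N_s}(Q_s,u_s)=\partial_uH_{N_s}(Q_s,u_s)=0\) into \(\Theta(Q_s,x_s)=\partial_x\Theta(Q_s,x_s)=0\). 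Hence \(Q_s\in\Spec(\Theta)\) by definition. Nothing about the size of \(x_s\) is needed here; unlike in Lemma~\ref{lem:compact-lifting}, no limit in the \(x\)-plane is taken. We only use that \(\Theta(Q_s,\cdot)\) is entire for each fixed \(|Q_s|<1\). Since \(Q_s\to\omega\), the root of unity \(\omega\) lies in the closure of \(\Spec(\Theta)\).

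\textbf{Step 2: \(\omega\) is an accumulation point, not merely a closure point.} We must rule out \(Q_s=\omega\) eventually. This is automatic, because \(|Q_s|<1=|\omega|\). So \(\omega\) is a limit of spectral points distinct from itself, i.e.\ an accumulation point of \(\Spec(\Theta)\).

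\textbf{Step 3: density.} Let \(\zeta\in\T\) be arbitrary and \(\varepsilon>0\). Choose a root of unity \(\omega\) with \(|\omega-\zeta|<\varepsilon/2\), which is possible because roots of unity are dense in \(\T\). By Steps 1--2, choose a spectral point \(q\) with \(|q-\omega|<\varepsilon/2\). Then \(|q-\zeta|<\varepsilon\), and \(q\neq\zeta\) since \(|q|<1\). Thus every \(\varepsilon\)-neighbourhood of \(\zeta\) contains a point of \(\Spec(\Theta)\setminus\{\zeta\}\). Equivalently, the set of accumulation points is closed and contains the dense set of roots of unity, hence contains all of \(\T\).

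There is no genuine obstacle in this reduction. The only points needing care are the nonvanishing of the prefactor in \eqref{eq:moving-window}, which requires \(Q_s\neq0\), and the strict inequality \(|Q_s|<1\). The substantive difficulty, producing the \((N_s,Q_s,u_s)\) at each root of unity, is deferred to the next section, as the statement intends.
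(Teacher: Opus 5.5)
Your proposal is correct and follows the paper's proof: Proposition~\ref{prop:moving-window-double} turns each moving-window solution into a double zero $x_s=-Q_s^{-N_s}e^{u_s}$ of $\Theta(Q_s,\cdot)$, so $Q_s\in\Spec(\Theta)$, and density of roots of unity in $\T$ then gives the full statement. You also check two points the paper leaves implicit: $Q_s\neq0$ for large $s$, and $|Q_s|<1$ forces $Q_s\neq\zeta$, so you get genuine accumulation points and not just closure points.
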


\begin{proof}
By Proposition~\ref{prop:moving-window-double}, each solution of the displayed moving-window system gives a genuine multiple zero
\[
        X_s=-Q_s^{-N_s}e^{u_s}
\]
of \(\Theta(Q_s,\cdot)\).  Hence each \(Q_s\) belongs to \(\Spec(\Theta)\).  Now let \(\zeta\in\T\).  Choose roots of unity \(\omega_r\to\zeta\).  Applying the hypothesis to \(\omega_r\), choose a spectral point \(Q_r\) with \(|Q_r-\omega_r|<1/r\).  Then \(Q_r\to\zeta\), proving the claim.
\end{proof}

\begin{remark}[Role of the moving-window formulation]
The initial truncation argument only produces double zeros of \(\Theta_{2N-1}\).  The moving-window identity \eqref{eq:moving-window} converts the genuine infinite problem into a precise pair of equations for \(H_N\).  The next section controls these equations in the rational boundary regime \(q\to\omega\), where the omitted one-sided tail and the bilateral theta factor have comparable exponential size.
\end{remark}

\begin{remark}[The plan]
The next section proves the root-of-unity lifting required in Proposition~\ref{prop:boundary-window-reduction}.  The proof uses the exact moving-window identity, the product formula for the bilateral theta function, and a two-variable Rouch\'e argument in a boundary layer of width $(\log N)/N$.  Thus the reduction above becomes an unconditional proof of unit-circle accumulation.
\end{remark}

\section{Root-of-unity boundary scaling and lifting}
In this section we finish the accumulation argument at roots of unity.  The result is local at a fixed root of unity, but density of roots of unity on $\T$ then gives the full unit-circle accumulation theorem.

Let $\omega$ be a primitive $b$-th root of unity.  We use the boundary scaling
\begin{equation}\label{eq:root-unity-scaling-final}
        q=q_N(\tau):=\omega\exp(-\tau/N),
        \qquad
        u=v/N,
        \qquad
        \Re\tau>0.
\end{equation}
The distinguished value is
\begin{equation}\label{eq:tau0-def}
        \tau_0:=\frac{\pi}{b}.
\end{equation}
It is the point where the eta-cubed factor in the bilateral theta function and the first omitted one-sided tail in $H_N$ have the same exponential order.

\subsection{The one-sided tail}
Recall that
\[
        H_N(q,u)=H(q,u)+R_N(q,u),
        \qquad
        H(q,u)=\sum_{s=-\infty}^{\infty}(-1)^sq^{s(s+1)/2}e^{su},
\]
where
\begin{equation}\label{eq:tail-final-def}
        R_N(q,u)=\sum_{t=N}^{\infty}(-1)^tq^{t(t+1)/2}e^{-(t+1)u}.
\end{equation}
Fix a residue class $r$ modulo $b$, and put
\begin{equation}\label{eq:Er-def}
        E_r(\tau):=
        \sum_{\ell=0}^{\infty}(-1)^\ell
        \omega^{r\ell+\ell(\ell+1)/2}e^{-\tau\ell}.
\end{equation}
This is an analytic function in $\Re\tau>0$.

\begin{lemma}[Tail asymptotics]\label{lem:tail-asymptotics-final}
Let $N\equiv r\pmod b$.  Uniformly for $\tau$ in compact subsets of $\{\Re\tau>0\}$ and for $v$ in compact subsets of $\C$,
\begin{align}
        R_N(q_N(\tau),v/N)
        &=(-1)^Nq_N(\tau)^{N(N+1)/2}e^{-v}
          \big(E_r(\tau)+O(N^{-1})\big),\label{eq:tail-asym-final}\\
        \partial_uR_N(q_N(\tau),v/N)
        &=-N(-1)^Nq_N(\tau)^{N(N+1)/2}e^{-v}
          \big(E_r(\tau)+O(N^{-1})\big).\label{eq:tail-der-asym-final}
\end{align}
The same estimates hold after one differentiation with respect to $\tau$ and $v$.
\end{lemma}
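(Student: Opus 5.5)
The plan is a direct term-by-term computation. Re-index the tail \eqref{eq:tail-final-def} by \(t=N+\ell\), \(\ell\ge0\), and factor out the leading term. Since all exponents are integers, there is no branch issue in
\[
        q^{t(t+1)/2}=q^{N(N+1)/2}\,q^{N\ell}\,q^{\ell(\ell+1)/2},
        \qquad
        e^{-(t+1)u}=e^{-v}\,e^{-(\ell+1)v/N}.
\]
With \(q=\omega e^{-\tau/N}\) we get \(q^{N\ell}=\omega^{N\ell}e^{-\tau\ell}=\omega^{r\ell}e^{-\tau\ell}\), using \(\omega^b=1\) and \(N\equiv r\pmod b\). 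We also get \(q^{\ell(\ell+1)/2}=\omega^{\ell(\ell+1)/2}e^{-\tau\ell(\ell+1)/(2N)}\). Hence, exactly,
\[
        R_N(q_N(\tau),v/N)=(-1)^Nq_N(\tau)^{N(N+1)/2}e^{-v}
        \sum_{\ell\ge0}(-1)^\ell\omega^{r\ell+\ell(\ell+1)/2}e^{-\tau\ell}\,e^{w_\ell},
\]
where
\[
        w_\ell=-\frac{\tau\ell(\ell+1)/2+(\ell+1)v}{N}.
\]
The claim \eqref{eq:tail-asym-final} then reduces to showing that
\[
        \sum_{\ell\ge0}e^{-\ell\Re\tau}\,|e^{w_\ell}-1|=O(N^{-1}),
\]
uniformly for \(\Re\tau\ge\sigma>0\), \(|\tau|\le T\) and \(|v|\le V\).

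For this bound I would use \(|e^{w}-1|\le|w|\max(1,e^{\Re w})\). Since \(\Re\tau>0\), the \(\tau\)-part of \(\Re w_\ell\) is nonpositive, so \(\Re w_\ell\le(\ell+1)V/N\). Also \(|w_\ell|\le C(\ell+1)^2/N\). Therefore the sum is bounded by
\[
        \frac CN\sum_{\ell\ge0}(\ell+1)^2e^{-\ell(\sigma-V/N)+V/N},
\]
which is \(O(N^{-1})\) once \(N>2V/\sigma\). The only real care here is this uniformity: the \(v\)-factor \(e^{-(\ell+1)v/N}\) can grow with \(\ell\), but the geometric decay \(e^{-\sigma\ell}\) absorbs it for large \(N\). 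The same computation shows that the series for \(E_r\) converges absolutely and locally uniformly, so \(E_r\) is analytic in \(\Re\tau>0\).

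For \eqref{eq:tail-der-asym-final}, differentiate \eqref{eq:tail-final-def} in \(u\) term by term, which is justified by local uniform convergence. This inserts the factor
\[
        -(t+1)=-N\Bigl(1+\frac{\ell+1}{N}\Bigr).
\]
The main part reproduces \(-N\) times the previous sum. The correction contributes
\[
        N\cdot\frac1N\sum_\ell(\ell+1)e^{-\ell(\sigma-V/N)}\,O(1)=O(1),
\]
which is \(N\cdot O(N^{-1})\) relative to the leading factor \(-N(-1)^Nq_N^{N(N+1)/2}e^{-v}\), as required.

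For the statement about one further derivative in \(\tau\) and \(v\), I interpret it as applying to the normalized error functions
\[
        \frac{(-1)^Ne^{v}R_N}{q_N^{N(N+1)/2}}-E_r(\tau),
        \qquad
        \frac{-(-1)^Ne^{v}\partial_uR_N}{N\,q_N^{N(N+1)/2}}-E_r(\tau).
\]
Both are holomorphic in \((\tau,v)\) and, by the above, \(O(N^{-1})\) uniformly on a slightly larger compact polydisc. Cauchy's estimates on that polydisc then give \(O(N^{-1})\) for their first \(\tau\)- and \(v\)-derivatives. The derivatives of the explicit prefactor \(q_N^{N(N+1)/2}e^{-v}\) are computed exactly and are not part of the error.

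I expect no step to be genuinely hard. The only points needing attention are that all estimates are uniform on compacta, and that the reduction \(\omega^{N\ell}=\omega^{r\ell}\) uses only the residue of \(N\) modulo \(b\).
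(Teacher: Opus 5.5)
Your proof is correct and follows the paper's argument. You reindex by $t=N+\ell$, factor out the $t=N$ term, compare termwise with $E_r(\tau)$ under geometric domination, and use $-(t+1)=-N\bigl(1+(\ell+1)/N\bigr)$ for the $u$-derivative; your bound $|e^{w}-1|\le |w|\max(1,e^{\Re w})$ and your Cauchy-estimate treatment of the $\tau$- and $v$-derivatives are only minor tightenings of the paper's informal $1+O\bigl((\ell^2+\ell|v|)/N\bigr)$ step and its termwise differentiation.
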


\begin{proof}
Put $t=N+\ell$ in \eqref{eq:tail-final-def}.  The $\ell$-th term is the first term times
\[
        (-1)^\ell q_N(\tau)^{\ell N+\ell(\ell+1)/2}e^{-\ell v/N}
        =(-1)^\ell \omega^{r\ell+\ell(\ell+1)/2}e^{-\tau\ell}
          \left(1+O\left(\frac{\ell^2+\ell |v|}{N}\right)\right).
\]
The summands are dominated by a geometric sequence on compact subsets of $\Re\tau>0$.  Termwise summation gives \eqref{eq:tail-asym-final}.  Differentiation with respect to $u$ multiplies the $t$-th summand by $-(t+1)=-N(1+O((\ell+1)/N))$, giving \eqref{eq:tail-der-asym-final}.  The estimates for $\tau$- and $v$-derivatives follow in the same way, since the additional polynomial factors in $\ell$ are still geometrically summable.
\end{proof}

We shall need at least one non-zero tail multiplier.

\begin{lemma}[Choice of residue class]\label{lem:tail-multiplier-nonzero}
For every primitive $b$-th root of unity $\omega$, there is a residue class $r$ modulo $b$ such that
\[
        E_r(\tau_0)\ne0,
        \qquad \tau_0=\pi/b.
\]
\end{lemma}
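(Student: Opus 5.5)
The plan is to argue by contradiction, using a discrete Fourier transform in the residue class \(r\) to isolate arithmetic progressions in the summation index \(\ell\). The point is that \(E_r(\tau)\) depends on \(r\) only through the character \(\ell\mapsto\omega^{r\ell}\). Summing over residue classes with weights \(\omega^{-rk}\) therefore extracts the part of the series supported on \(\ell\equiv k\pmod b\). One of these subseries turns out to be an explicit non-vanishing geometric series.

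\textbf{Step 1 (Fourier inversion in \(r\)).} For \(k\in\{0,\dots,b-1\}\) and \(\Re\tau>0\), I would use absolute convergence and the orthogonality relation \(\sum_{r=0}^{b-1}\omega^{r(\ell-k)}=b\cdot\mathbf 1_{\ell\equiv k\ (b)}\). Orthogonality holds because \(\omega\) is a primitive \(b\)-th root of unity. This gives
\[
        \sum_{r=0}^{b-1}\omega^{-rk}E_r(\tau)
        = b\sum_{\ell\equiv k\ (\mathrm{mod}\ b)}(-1)^\ell\omega^{\ell(\ell+1)/2}e^{-\tau\ell}.
\]
Hence, if \(E_r(\tau_0)=0\) for every \(r\), each of these \(b\) progression-subseries vanishes at \(\tau=\tau_0\). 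In particular the subseries for \(k=0\) vanishes.

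\textbf{Step 2 (evaluating the \(k=0\) subseries).} Write \(\ell=bn\) with \(n\ge0\). I would compute \(\omega^{bn(bn+1)/2}\) by splitting on the parity of \(b\).
\begin{itemize}
\item If \(b\) is odd, then \(bn+1\) or \(n\) supplies the factor \(2\). Hence \(bn(bn+1)/2\equiv0\pmod b\) and \(\omega^{bn(bn+1)/2}=1\).
\item If \(b\) is even, then \(bn(bn+1)/2=(b/2)\,n(bn+1)\). Since \(bn+1\) is odd, this is \(\equiv (b/2)n\pmod b\). Using \(\omega^{b/2}=-1\), we get \(\omega^{bn(bn+1)/2}=(-1)^n\).
\end{itemize}
In either case the general term is \(c^n\) for a fixed constant \(c=\epsilon\,e^{-\tau b}\). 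Here \(\epsilon\in\{\pm1\}\) collects the signs \((-1)^{bn}\) and \((-1)^n\). The subseries therefore equals \(\sum_{n\ge0}c^n=1/(1-c)\).

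\textbf{Step 3 (conclusion).} At \(\tau=\tau_0=\pi/b\) one has \(|c|=e^{-\pi}<1\), so the geometric series converges. Its sum \(1/(1-c)\) is finite and non-zero. This contradicts Step 1, so some \(E_r(\tau_0)\) is non-zero. The argument in fact works for every \(\tau\) with \(\Re\tau>0\); the specific value \(\tau_0\) plays no special role here.

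I expect no real obstacle. The only point requiring care is the parity computation of \(\omega^{\ell(\ell+1)/2}\) along \(\ell\in b\mathbb Z_{\ge0}\) for even \(b\). That computation is what makes the \(k=0\) progression collapse to a pure geometric series.
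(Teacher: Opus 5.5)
Your proof is correct and follows the same route as the paper: discrete Fourier inversion in $r$ reduces the lemma to showing that the $\ell\equiv 0\pmod b$ subseries $S_0$ is non-zero. The only difference is in the last step. The paper just bounds $|S_0-1|\le\sum_{p\ge1}e^{-\pi p}<1$ at $\tau_0$. Your parity computation evaluates $S_0$ exactly as $1/(1+e^{-b\tau})$; in both parity cases your constant is $c=-e^{-b\tau}$. This yields non-vanishing for every $\Re\tau>0$, not only at $\tau_0$.
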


\begin{proof}
Write $E_r(\tau_0)=\sum_{a=0}^{b-1}\omega^{ra}S_a$, where
\[
        S_a:=\sum_{p=0}^{\infty}(-1)^{a+pb}
        \omega^{(a+pb)(a+pb+1)/2}e^{-\tau_0(a+pb)}.
\]
If all $E_r(\tau_0)$ were zero, discrete Fourier inversion would give $S_a=0$ for every $a$.  But
\[
        S_0=1+\sum_{p=1}^{\infty}(-1)^{pb}
        \omega^{pb(pb+1)/2}e^{-\pi p},
\]
and therefore
\[
        |S_0-1|\le \sum_{p=1}^{\infty}e^{-\pi p}<1.
\]
Thus $S_0\ne0$, a contradiction.
\end{proof}

\subsection{Cusp asymptotics for the bilateral part}
The bilateral theta factor has the product representation
\begin{equation}\label{eq:H-product-final}
        H(q,u)=\prod_{\nu=1}^{\infty}(1-q^\nu)(1-e^uq^\nu)(1-e^{-u}q^{\nu-1}),
\end{equation}
while
\[
        \psi(q)=\partial_uH(q,0)=\prod_{\nu=1}^{\infty}(1-q^\nu)^3.
\]
The next lemma is the standard local form of this product at a rational cusp.  We include the proof because it is the estimate which turns the previous conditional statement into a proof.

\begin{lemma}[Uniform sine and eta asymptotics]\label{lem:cusp-asymptotics-final}
Fix $C>0$ and a compact set $V\Subset\C$.  Uniformly for
\[
        |\tau-\tau_0|\le C\frac{\log N}{N},
        \qquad v\in V,
\]
one has
\begin{align}
        H(q_N(\tau),v/N)
        &=\frac{\psi(q_N(\tau))}{N}
          \left(S_b(\tau,v)+O\left(\frac{\log N}{N}\right)\right),\label{eq:H-sine-final}\\
        \partial_uH(q_N(\tau),v/N)
        &=\psi(q_N(\tau))
          \left(\partial_vS_b(\tau,v)+O\left(\frac{\log N}{N}\right)\right),\label{eq:Hu-sine-final}
\end{align}
where
\begin{equation}\label{eq:Sb-final-def}
        S_b(\tau,v):=\frac{b\tau}{\pi}\sin\frac{\pi v}{b\tau}.
\end{equation}
Moreover, after fixing a residue class of $N$ modulo $2b$,
\begin{equation}\label{eq:Lambda-final-asym}
        \Lambda_N(\tau):=\frac{N(-1)^Nq_N(\tau)^{N(N+1)/2}}{\psi(q_N(\tau))}
        =A_N(\tau)\exp\{Nf_b(\tau)\},
\end{equation}
where
\begin{equation}\label{eq:fb-def}
        f_b(\tau)=\frac{\pi^2}{2b^2\tau}-\frac{\tau}{2},
\end{equation}
$A_N$ is holomorphic and non-vanishing in the same window, and
\begin{equation}\label{eq:AN-bound}
        \log |A_N(\tau)|=O(\log N),
        \qquad
        \partial_\tau\log A_N(\tau)=O(\log N)
\end{equation}
there, after a holomorphic choice of the logarithm.  The estimates also hold after one $\tau$- and $v$-derivative in \eqref{eq:H-sine-final}--\eqref{eq:Hu-sine-final}.
\end{lemma}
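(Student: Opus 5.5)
The plan is to reduce everything to the modular transformation of Jacobi's \(\vartheta_1\) and of Dedekind's \(\eta\) at the rational cusp determined by \(\omega\). In that transformed picture the dual nome is exponentially small, so the sine and the exponential factor \(e^{Nf_b}\) appear as leading terms.

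\textbf{Step 1: the ratio \(H/\psi\) as a theta quotient.} Write \(\omega=e^{2\pi i a/b}\) with \(\gcd(a,b)=1\), and set \(q=e^{2\pi i z}\), so that
\[
z=\frac ab+\frac{i\tau}{2\pi N}.
\]
By \eqref{eq:H-product-final} and Lemma~\ref{lem:eta-cubed},
\[
\frac{H(q,u)}{\psi(q)}=(1-e^{-u})\prod_{\nu\ge1}\frac{(1-e^uq^\nu)(1-e^{-u}q^\nu)}{(1-q^\nu)^2}.
\]
Up to an elementary factor \(e^{\pm u/2}\), this equals \(\vartheta_1(w\,|\,z)/\vartheta_1'(0\,|\,z)\) with \(w=u/(2\pi i)\).

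Choose \(\gamma=\begin{pmatrix}\alpha&\beta\\ -b&a\end{pmatrix}\in SL_2(\mathbb Z)\), which sends \(a/b\) to \(\infty\). Then
\[
-bz+a=-\frac{ib\tau}{2\pi N},
\qquad
\Im(\gamma z)=\frac{2\pi N}{b^2\Re(\cdot)}\cdots\asymp\frac{N}{b^2\tau}\to\infty .
\]
The transformation law for \(\vartheta_1\) is
\[
\vartheta_1\Bigl(\frac{w}{cz+d}\,\Big|\,\gamma z\Bigr)=\epsilon\,(cz+d)^{1/2}e^{\pi i c w^2/(cz+d)}\,\vartheta_1(w\,|\,z),
\]
with an eighth root of unity \(\epsilon\). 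Dividing by its \(w\)-derivative at \(0\) removes \(\epsilon\) and the half-power, leaving
\[
\frac{\vartheta_1(w|z)}{\vartheta_1'(0|z)}
=(cz+d)\,e^{-\pi i c w^2/(cz+d)}\,
\frac{\vartheta_1(w'|\gamma z)}{\vartheta_1'(0|\gamma z)},
\qquad
w'=\frac{w}{cz+d}.
\]
With \(u=v/N\) we have \(w'=\dfrac{v}{b\tau}\), up to sign conventions.

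\textbf{Step 2: expanding in the dual nome.} Since \(\Im\gamma z\gtrsim N\), the dual nome is \(O(e^{-cN})\). Hence
\[
\frac{\vartheta_1(w'|\gamma z)}{\vartheta_1'(0|\gamma z)}=\frac{\sin(\pi w')}{\pi}\bigl(1+O(e^{-cN})\bigr).
\]
The Gaussian factor is \(e^{O(|v|^2/N)}=1+O(1/N)\), and \(e^{\pm u/2}=1+O(1/N)\). The prefactor \(cz+d\) contributes \(b\tau/(2\pi N)\) up to a unimodular constant. Collecting these gives
\[
\frac{H}{\psi}=\frac{1}{N}\Bigl(S_b(\tau,v)+O(1/N)\Bigr),
\]
which is \eqref{eq:H-sine-final}. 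The stated \(O((\log N)/N)\) is weaker, and it is uniform because \(\tau\) stays within \(O((\log N)/N)\) of \(\tau_0\).

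For \eqref{eq:Hu-sine-final}, differentiate in \(u\). This equals \(N\,\partial_v\) applied to the analytic expansion, and Cauchy estimates on a slightly larger \(v\)-compact control the derivatives of the error terms; the same argument handles \(\tau\)-derivatives.

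\textbf{Step 3: the factor \(\Lambda_N\).} Here I use \(\psi=q^{-1/8}\eta(z)^3\) together with the \(\eta\)-transformation
\[
\eta(\gamma z)=\epsilon(\gamma)\,(-i(cz+d))^{1/2}\,\eta(z).
\]
Since \(\eta(\gamma z)=e^{\pi i\gamma z/12}(1+O(e^{-cN}))\), this yields
\[
\log\psi(q_N(\tau))=-\frac{\pi^2N}{2b^2\tau}+\frac32\log\frac{N}{\tau}+O(1)+\text{(unimodular constants)}.
\]
The exponent comes from \(3\cdot\pi i\gamma z/12\), where
\[
\gamma z=\frac{\alpha}{-b}\cdots+\frac{2\pi i N}{b^2\tau};
\]
I will have to check carefully that the real part of \(\pi i\gamma z/4\) is exactly \(-\pi^2N/(2b^2\tau)\).

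Meanwhile,
\[
q_N^{N(N+1)/2}=\omega^{N(N+1)/2}\,e^{-\tau(N+1)/2}.
\]
Fixing \(N\bmod 2b\) freezes both \(\omega^{N(N+1)/2}\) and \((-1)^N\). Combining the two expansions,
\[
\Lambda_N=A_N\,e^{N(\pi^2/(2b^2\tau)-\tau/2)},
\]
where
\[
A_N=(\text{const of modulus }1)\cdot N\cdot(N/\tau)^{-3/2}\,e^{-\tau/2}\,q^{1/8}\,(1+O(e^{-cN})).
\]
This \(A_N\) is holomorphic and nonvanishing, with \(\log|A_N|=O(\log N)\) and \(\partial_\tau\log A_N=O(1)\subset O(\log N)\), which gives \eqref{eq:AN-bound}.

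\textbf{Main obstacle.} I expect the hardest part to be the bookkeeping in the transformation at a general cusp \(a/b\): the real part of \(\gamma z\), the choice of holomorphic branches, and the multiplier \(\epsilon(\gamma)\). I would handle this by noting that the multipliers cancel in the \(\vartheta_1/\vartheta_1'\) ratio, and by absorbing the \(\eta\)-multiplier into \(A_N\), where only its modulus matters and that modulus is \(1\). As a consistency check, I would verify against the case \(b=1\), \(\omega=1\), where \(q=e^{-\tau/N}\) and the classical formula
\[
\eta(e^{-t})\sim\sqrt{2\pi/t}\,e^{-\pi^2/(6t)}
\]
must reproduce \(f_1(\tau)=\pi^2/(2\tau)-\tau/2\).
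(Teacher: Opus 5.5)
Your proposal is correct, and it proves the lemma by a genuinely different route from the paper.

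\textbf{The paper's route.} The paper never leaves the $q$-plane:
\begin{itemize}
\item it writes $H/\psi$ as a quotient of $q$-Pochhammer symbols and splits the products into residue classes modulo $b$;
\item a Taylor expansion in $v/N$ shows that the $b-1$ non-singular classes contribute $1+O(N^{-1})$;
\item the singular class (nome $Q=q^b$) is evaluated through the $q$-gamma function and Euler's reflection formula;
\item the estimate $\log\psi(q_N(\tau))=-\pi^2N/(2b^2\tau)+O(\log N)$ comes from Euler--Maclaurin/dilogarithm asymptotics together with $\sum_{a=1}^{b-1}\mathrm{Li}_2(\omega^a)=\pi^2/(6b)-\pi^2/6$.
\end{itemize}

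\textbf{Your route and what it buys.} You apply one exact $SL_2(\mathbb Z)$ transformation that moves the cusp $a/b$ to $\infty$. After that, the only approximation is the dual nome, of size about $e^{-4\pi N/b}$. This gives noticeably more than the paper:
\begin{itemize}
\item the identity $H/\psi=N^{-1}S_b(\tau,v)\,e^{-v/(2N)}e^{v^2/(2\tau N)}\bigl(1+O(e^{-cN})\bigr)$;
\item an explicit $A_N$ with $|A_N|\asymp N^{-1/2}$ and $\partial_\tau\log A_N=O(1)$;
\item validity uniformly on a fixed compact neighbourhood of $\tau_0$ in $\Re\tau>0$, not only on the logarithmic window. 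The $\tau$-derivative claims therefore follow from Cauchy estimates with no loss.
\end{itemize}
Your route also bypasses the $q$-gamma limit in a Stolz angle, which the paper itself flags as its one external analytic input. In exchange, the paper's route uses only product manipulations and Euler--Maclaurin. It needs no multiplier systems or half-integral-weight branch conventions.

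\textbf{Bookkeeping points to fix when you write it out.}
\begin{itemize}
\item The identification is $H/\psi=2\pi i\,e^{-u/2}\,\vartheta_1(w|z)/\vartheta_1'(0|z)$, not just $e^{\pm u/2}$ times the quotient. The factor $2\pi i$ combines with $cz+d=-ib\tau/(2\pi N)$ to give exactly $b\tau/N$, with no leftover unimodular constant. The normalization of $S_b$ matters later, where $S_b(\tau_0,v)=\sin v$ is used. A convenient way to fix all constants at once: $\partial_uH(q,0)=\psi(q)$ forces $H/\psi=u+O(u^2)$.
\item For the $\eta$-law, use the representative with $c=b>0$, $d=-a$. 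Then $-i(cz+d)=b\tau/(2\pi N)$ lies in the right half-plane and the principal square root is legitimate. $A_N$ then carries the positive constant $(b/2\pi)^{3/2}$ times a unimodular factor. This constant is not unimodular, but it is harmless for the bounds on $A_N$.
\item The check you deferred goes through, but it is not a statement about a real part. For the matrix of your Step 1, $\gamma z=\alpha/c-1/(c(cz+d))$ gives $\gamma z=-\alpha/b+2\pi iN/(b^2\tau)$. Hence $\pi i\gamma z/4=-\pi^2N/(2b^2\tau)-\pi i\alpha/(4b)$ as an exact holomorphic identity. The constant phase goes into $A_N$.
\end{itemize}
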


\begin{proof}
We use the standard notation
\[
        \qpoch{a}{Q}:=\prod_{k=0}^{\infty}(1-aQ^k).
\]
From the product formula \eqref{eq:H-product-final} and Lemma~\ref{lem:eta-cubed},
\begin{equation}\label{eq:qpoch-quotient}
        \frac{H(q,u)}{\psi(q)}=
        \frac{\qpoch{e^u q}{q}\qpoch{e^{-u}}{q}}{\qpoch{q}{q}^2}.
\end{equation}
Put $Q=q^b=e^{-b\tau/N}$ and $h=b\tau/N$.  Grouping the products in \eqref{eq:qpoch-quotient} into residue classes modulo $b$ gives a singular factor
\begin{equation}\label{eq:singular-factor}
        G_N(\tau,v):=
        \frac{\qpoch{e^{v/N}Q}{Q}\qpoch{e^{-v/N}}{Q}}{\qpoch{Q}{Q}^2}
\end{equation}
and a product $U_N(\tau,v)$ over the non-zero residue classes.  For $a=1,\ldots,b-1$, the corresponding factor of $U_N$ is
\[
        \frac{\qpoch{e^{v/N}q^a}{Q}\qpoch{e^{-v/N}q^a}{Q}}{\qpoch{q^a}{Q}^2}.
\]
Since $q^a$ stays uniformly away from $1$ in the present window, the logarithm of this factor is obtained by a Taylor expansion in $v/N$.  The linear terms cancel and the quadratic remainder is bounded by
\[
        O\left(N^{-2}\sum_{k\ge0}e^{-c k/N}\right)=O(N^{-1}),
\]
uniformly for $v\in V$.  Hence
\begin{equation}\label{eq:UN-estimate}
        U_N(\tau,v)=1+O(N^{-1}),
\end{equation}
with the same bound after one $\tau$- or $v$-derivative, after Cauchy estimates in slightly enlarged compact sets.

It remains to evaluate the singular factor.  Write
\[
        \alpha=\frac{v}{b\tau},
        \qquad Q=e^{-h}.
\]
Then $e^{-v/N}=Q^{\alpha}$ and $e^{v/N}Q=Q^{1-\alpha}$, so
\[
        G_N(\tau,v)=
        \frac{\qpoch{Q^\alpha}{Q}\qpoch{Q^{1-\alpha}}{Q}}{\qpoch{Q}{Q}^2}.
\]
Using the $q$-gamma function
\[
        \Gamma_Q(z)=(1-Q)^{1-z}\frac{\qpoch{Q}{Q}}{\qpoch{Q^z}{Q}},
\]
we get
\[
        G_N(\tau,v)=\frac{1-Q}{\Gamma_Q(\alpha)\Gamma_Q(1-\alpha)}.
\]
As $Q\to1$ inside a fixed Stolz angle, $\Gamma_Q(z)\to\Gamma(z)$ locally uniformly in $z$ after taking reciprocals, and therefore, by Euler's reflection formula,
\[
        G_N(\tau,v)=
        (1-Q)\frac{\sin(\pi\alpha)}{\pi}
        +O(N^{-2})
        =\frac1N\frac{b\tau}{\pi}\sin\frac{\pi v}{b\tau}
          +O(N^{-2}),
\]
locally uniformly in $v$ and $\tau$.  Together with \eqref{eq:UN-estimate}, this proves \eqref{eq:H-sine-final}; differentiating with respect to $u$ is the same as applying $N\partial_v$, which gives \eqref{eq:Hu-sine-final}.  The stated derivative estimates follow from the same locally uniform holomorphic convergence and Cauchy estimates.

We next estimate $\psi(q_N(\tau))=\qpoch{q_N(\tau)}{q_N(\tau)}^3$.  We use the standard Euler--Maclaurin expansion, uniform in the same logarithmic window,
\begin{equation}\label{eq:qpoch-dilog-asym}
        \log\qpoch{a e^{-\gamma/N}}{e^{-b\tau/N}}
        =-\frac{N}{b\tau}\operatorname{Li}_2(a)+O(\log N),
\end{equation}
for $a$ in a compact subset of $\C\setminus[1,\infty)$, and the classical singular case
\begin{equation}\label{eq:singular-qpoch-asym}
        \log\qpoch{e^{-b\tau/N}}{e^{-b\tau/N}}
        =-\frac{\pi^2N}{6b\tau}+O(\log N).
\end{equation}
Both estimates follow by applying Euler--Maclaurin to the logarithm of the product; differentiating the same expansion gives the corresponding $\tau$-derivative with an $O(\log N)$ remainder after the leading term is removed.

Now factor
\[
        \qpoch{q}{q}=\qpoch{q^b}{q^b}
        \prod_{a=1}^{b-1}\qpoch{q^a}{q^b}.
\]
Since
\[
        \sum_{a=0}^{b-1}\operatorname{Li}_2(\omega^a)=\frac{\pi^2}{6b},
        \qquad
        \sum_{a=1}^{b-1}\operatorname{Li}_2(\omega^a)=\frac{\pi^2}{6b}-\frac{\pi^2}{6},
\]
\eqref{eq:qpoch-dilog-asym} and \eqref{eq:singular-qpoch-asym} give
\[
        \log\qpoch{q_N(\tau)}{q_N(\tau)}
        =-\frac{\pi^2N}{6b^2\tau}+O(\log N).
\]
Therefore
\begin{equation}\label{eq:psi-cusp-final}
        \log\psi(q_N(\tau))
        =-\frac{\pi^2N}{2b^2\tau}+O(\log N),
\end{equation}
again with the analogous derivative estimate after subtracting the displayed leading term.
Finally,
\[
        q_N(\tau)^{N(N+1)/2}
        =\omega^{N(N+1)/2}\exp\left(-\frac{\tau(N+1)}2\right).
\]
After fixing $N$ modulo $2b$, the phase $(-1)^N\omega^{N(N+1)/2}$ is fixed.  Combining this identity with \eqref{eq:psi-cusp-final} gives \eqref{eq:Lambda-final-asym} with
\[
        f_b(\tau)=\frac{\pi^2}{2b^2\tau}-\frac{\tau}{2},
\]
and all remaining factors absorbed into a holomorphic non-vanishing $A_N$ satisfying \eqref{eq:AN-bound}.
\end{proof}
\begin{remark}
The last display is the usual Dedekind eta asymptotic at a rational cusp.  The proof above is intentionally written only at the precision needed here: an exponential term, a non-zero holomorphic prefactor of at most polynomial size, and uniformity in a logarithmic boundary window.  The only external analytic input hidden in this shorthand is the standard uniform $q$-gamma/$q$-Pochhammer asymptotics as $Q\to1$ in a Stolz angle.  Equivalently, one may replace this paragraph by a citation to any standard uniform version of the McIntosh asymptotic expansion for $q$-shifted factorials~\cite{McIntoshQShifted}.
\end{remark}

\subsection{Matching the two exponential scales}
Choose a residue class $r$ modulo $b$ for which $E_r(\tau_0)\ne0$, as in Lemma~\ref{lem:tail-multiplier-nonzero}.  We now restrict $N$ to an arithmetic progression modulo $2b$ with $N\equiv r\pmod b$.  Put
\[
        \lambda_N(\tau):=\Lambda_N(\tau)E_r(\tau).
\]
At $\tau_0=\pi/b$,
\[
        f_b(\tau_0)=0,
        \qquad
        f_b'(\tau_0)=-1.
\]
Let
\[
        v_0:=\frac{3\pi}{4},
        \qquad
        \mu_0:=-e^{v_0}\sin v_0=-\frac{e^{3\pi/4}}{\sqrt2}.
\]
Since $S_b(\tau_0,v)=\sin v$, the pair $v_0,\mu_0$ is chosen so that
\[
        \sin v_0+\mu_0e^{-v_0}=0,
        \qquad
        \cos v_0-\mu_0e^{-v_0}=0.
\]

\begin{lemma}[One-variable matching]\label{lem:one-variable-matching}
For all sufficiently large $N$ in the chosen arithmetic progression there exists
\[
        \tau_N^*=\tau_0+O\left(\frac{\log N}{N}\right)
\]
such that
\begin{equation}\label{eq:lambda-match}
        \lambda_N(\tau_N^*)=\mu_0.
\end{equation}
Moreover, uniformly for bounded $\xi$,
\begin{equation}\label{eq:lambda-local}
        \lambda_N(\tau_N^*+\xi/N)=\mu_0e^{-\xi}\,(1+o(1)).
\end{equation}
\end{lemma}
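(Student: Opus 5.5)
We write
\[
        \lambda_N(\tau)=A_N(\tau)E_r(\tau)\exp\{Nf_b(\tau)\}
\]
using \eqref{eq:Lambda-final-asym}, and work in the logarithmic window $|\tau-\tau_0|\le C(\log N)/N$ of Lemma~\ref{lem:cusp-asymptotics-final}. Since $E_r$ is holomorphic with $E_r(\tau_0)\ne0$ by Lemma~\ref{lem:tail-multiplier-nonzero}, it is non-vanishing near $\tau_0$ and has a holomorphic logarithm there. The equation \eqref{eq:lambda-match} is then equivalent to
\begin{equation*}
        F_N(\tau):=Nf_b(\tau)+\log A_N(\tau)+\log E_r(\tau)-\log\mu_0\in 2\pi i\mathbb Z .
\end{equation*}
Here we fix a branch of $\log\mu_0$ and aim for the integer $0$.

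First we Taylor expand $f_b$ at $\tau_0$. Since $f_b(\tau_0)=0$ and $f_b'(\tau_0)=-1$,
\[
        Nf_b(\tau_0+\zeta)=-N\zeta+O(N|\zeta|^2).
\]
For $|\zeta|\le C(\log N)/N$ the error is $O((\log N)^2/N)=o(1)$. Next set $L_N:=\log A_N(\tau_0)+\log E_r(\tau_0)-\log\mu_0$. By \eqref{eq:AN-bound}, $L_N=O(\log N)$, and $\partial_\tau\log A_N=O(\log N)$, so over the window the $A_N$ term varies by only $O((\log N)^2/N)=o(1)$; the $E_r$ term varies by $O(\log N/N)$. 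Hence, uniformly in the window,
\[
        F_N(\tau_0+\zeta)=-N\zeta+L_N+o(1).
\]

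The candidate root is $\zeta_N^{(0)}=L_N/N$, which is $O(\log N/N)$. It lies well inside the window once $C$ is chosen larger than the implied constant in $L_N=O(\log N)$; taking $C$ large is harmless in Lemma~\ref{lem:cusp-asymptotics-final}. To pass from the candidate to an actual root, apply Rouch\'e's theorem, or equivalently the argument principle, to $F_N(\tau_0+\zeta)$ versus the linear map $-N\zeta+L_N$. The comparison is made on the circle $|\zeta-\zeta_N^{(0)}|=1/N$. On that circle the linear map has modulus $1$, while the difference is $o(1)$. This produces a unique zero $\tau_N^*=\tau_0+\zeta_N^{(0)}+O(o(1)/N)=\tau_0+O(\log N/N)$ with $F_N(\tau_N^*)=0$, hence $\lambda_N(\tau_N^*)=\mu_0$.

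For \eqref{eq:lambda-local}, take bounded $\xi$ and form the ratio
\[
        \frac{\lambda_N(\tau_N^*+\xi/N)}{\lambda_N(\tau_N^*)}
        =\exp\bigl(F_N(\tau_N^*+\xi/N)-F_N(\tau_N^*)\bigr).
\]
By the same expansion, now centred at $\tau_N^*$, the exponent equals
\[
        Nf_b'(\tau_N^*)\,\xi/N+O(\xi^2/N)+O((\log N)/N).
\]
Since $f_b'(\tau_N^*)=-1+O(\log N/N)$, the exponent is $-\xi+o(1)$, which gives \eqref{eq:lambda-local}.

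The main obstacle is the bookkeeping of the prefactor $A_N$. We only know $\log|A_N|=O(\log N)$, so the shift of $\tau_N^*$ away from $\tau_0$ is genuinely of order $(\log N)/N$ and not $1/N$. The argument therefore needs the derivative bound on $\log A_N$ from \eqref{eq:AN-bound} over the whole window, and it needs the window constant $C$ to dominate the constant in $L_N$. The branch of $\log A_N$ must also be fixed holomorphically, so that $\arg A_N$ does not wind within the window; that is exactly what \eqref{eq:AN-bound} provides.
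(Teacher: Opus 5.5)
Your proposal is correct and follows essentially the paper's argument: write $\lambda_N=A_NE_r\,e^{Nf_b}$, pass to a holomorphic logarithm, use $f_b(\tau_0)=0$, $f_b'(\tau_0)=-1$ and the $O(\log N)$ bounds on $\log A_N$ to apply Rouch\'e in the logarithmic window, and then expand at $\tau_N^*$ to get \eqref{eq:lambda-local}; your comparison on the circle of radius $1/N$ about $\tau_0+L_N/N$ is a sharper localization of the paper's comparison on $|\tau-\tau_0|=C\log N/N$. The one point to state explicitly is that \eqref{eq:AN-bound} controls only $\log|A_N|$, so the branch of $\log A_N$ should be normalized at $\tau_0$ (imaginary part in $(-\pi,\pi]$); this gives $L_N=O(\log N)$ with a constant independent of the window constant $C$, so choosing $C$ large involves no circularity.
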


\begin{proof}
By Lemmas~\ref{lem:tail-multiplier-nonzero} and \ref{lem:cusp-asymptotics-final}, $\lambda_N(\tau)=B_N(\tau)e^{Nf_b(\tau)}$, where $B_N$ is holomorphic, non-zero, and satisfies $\log|B_N|=O(\log N)$ and $\partial_\tau\log B_N=O(\log N)$ in the logarithmic window.  Choose a branch of $\log B_N$ after fixing the arithmetic progression.  The equation $\lambda_N(\tau)=\mu_0$ is equivalent to
\[
        Nf_b(\tau)+\log B_N(\tau)-\log\mu_0=0
\]
for a suitable fixed branch of $\log\mu_0$.  Since $f_b(\tau_0)=0$ and $f_b'(\tau_0)=-1$, Newton's theorem, or Rouch\'e's theorem on the circle $|\tau-\tau_0|=C\log N/N$ with $C$ sufficiently large, gives a solution $\tau_N^*$ in this circle.  Finally,
\[
        N\big(f_b(\tau_N^*+\xi/N)-f_b(\tau_N^*)\big)=-\xi+o(1),
\]
while $\log B_N(\tau_N^*+\xi/N)-\log B_N(\tau_N^*)=o(1)$.  This proves \eqref{eq:lambda-local}.
\end{proof}

\subsection{The two-variable lifting}
Define the normalized equations
\begin{equation}\label{eq:normalized-FN}
        F_{1,N}(\tau,v):=\frac{N}{\psi(q_N(\tau))}H_N(q_N(\tau),v/N),
        \qquad
        F_{2,N}(\tau,v):=\frac{1}{\psi(q_N(\tau))}\partial_uH_N(q_N(\tau),v/N).
\end{equation}
By Lemmas~\ref{lem:tail-asymptotics-final} and \ref{lem:cusp-asymptotics-final}, if $N\equiv r\pmod b$, then
\begin{align}
        F_{1,N}(\tau,v)
        &=S_b(\tau,v)+\lambda_N(\tau)e^{-v}+\mathcal E_{1,N}(\tau,v),\label{eq:F1-asym}\\
        F_{2,N}(\tau,v)
        &=\partial_vS_b(\tau,v)-\lambda_N(\tau)e^{-v}+\mathcal E_{2,N}(\tau,v),\label{eq:F2-asym}
\end{align}
where, uniformly in the logarithmic window and for $v$ in compact sets,
\begin{equation}\label{eq:F-error-bound}
        \mathcal E_{j,N}(\tau,v)
        =O\!\left(\frac{\log N}{N}\right)
         +O\!\left(\frac{|\Lambda_N(\tau)|}{N}\right),
        \qquad j=1,2.
\end{equation}
In particular, on every bounded $\xi$-window around a solution of the matching equation, i.e. after substituting
$\tau=\tau_N^*+\xi/N$ with $\lambda_N(\tau_N^*)=\mu_0$, Lemma~\ref{lem:one-variable-matching} gives
$|\lambda_N(\tau)|=O(1)$ and hence $\mathcal E_{j,N}=o(1)$ uniformly for bounded $\xi$ and for $v$ in compact sets.

\begin{theorem}[Root-of-unity lifting]\label{thm:root-unity-lifting-final}
Let $\omega$ be any root of unity.  Then there exist $q_N\in\Spec(\Theta)$, along an infinite arithmetic progression of $N$, such that
\[
        q_N\to\omega,
        \qquad |q_N|<1.
\]
More precisely, for all sufficiently large $N$ in a suitable arithmetic progression, there are
\[
        \tau_N=\frac\pi b+O\left(\frac{\log N}{N}\right),
        \qquad
        v_N=\frac{3\pi}{4}+o(1),
\]
such that, with
\[
        q_N=\omega e^{-\tau_N/N},
        \qquad
        u_N=v_N/N,
\]
one has
\[
        H_N(q_N,u_N)=\partial_uH_N(q_N,u_N)=0.
\]
Consequently $x_N=-q_N^{-N}e^{u_N}$ is a multiple zero of $\Theta(q_N,\cdot)$.
\end{theorem}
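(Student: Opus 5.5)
The plan is to solve the normalized system $F_{1,N}=F_{2,N}=0$ of \eqref{eq:normalized-FN} in the rescaled variables $(\xi,v)$. Here $\tau=\tau_N^*+\xi/N$, and $\tau_N^*$ is supplied by Lemma~\ref{lem:one-variable-matching}. The proof is a two-variable Rouch\'e (or Newton--Kantorovich) argument around the model solution $(\xi,v)=(0,v_0)$.

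Fix $b$, $\omega$ and a residue class $r$ with $E_r(\tau_0)\neq0$ (Lemma~\ref{lem:tail-multiplier-nonzero}). Restrict $N$ to an arithmetic progression modulo $2b$ with $N\equiv r\pmod b$. Since $\psi(q_N(\tau))\ne0$ for $|q_N|<1$, the zeros of $(F_{1,N},F_{2,N})$ are exactly the solutions of $H_N=\partial_uH_N=0$ at $q=q_N(\tau)$, $u=v/N$.

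\textbf{Step 1: the limiting system.} Substitute $\tau=\tau_N^*+\xi/N$ into \eqref{eq:F1-asym}--\eqref{eq:F2-asym}. Since $\tau-\tau_0=O(\log N/N)$, we have $S_b(\tau,v)=\sin v+O(\log N/N)$ and likewise for $\partial_vS_b$. Lemma~\ref{lem:one-variable-matching} gives $\lambda_N(\tau)=\mu_0e^{-\xi}(1+o(1))$. Hence $|\Lambda_N|=O(1)$, and by \eqref{eq:F-error-bound} the errors $\mathcal E_{j,N}$ are $o(1)$. Therefore, uniformly on compact sets of $(\xi,v)\in\C^2$,
\[
F_{1,N}=\sin v+\mu_0e^{-\xi-v}+o(1),\qquad F_{2,N}=\cos v-\mu_0e^{-\xi-v}+o(1).
\]
The limit map is $G(\xi,v)=(\sin v+\mu_0e^{-\xi-v},\ \cos v-\mu_0e^{-\xi-v})$. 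By the choice of $v_0,\mu_0$ we have $G(0,v_0)=0$.

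\textbf{Step 2: nondegeneracy of the model zero.} At $(0,v_0)$, put $m=\mu_0e^{-v_0}=-\sin v_0$. The Jacobian is
\[
\det\begin{pmatrix}-m & \cos v_0-m\\ m & -\sin v_0+m\end{pmatrix}
=-m(-\sin v_0+m)-m(\cos v_0-m).
\]
With $\sin v_0=\tfrac{1}{\sqrt2}$, $\cos v_0=-\tfrac1{\sqrt2}$ and $m=-\tfrac1{\sqrt2}$, this equals $-m(-\sqrt2)-m(0)=\sqrt2\,m=-1\neq0$. So $(0,v_0)$ is a simple isolated zero of the holomorphic map $G$.

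\textbf{Step 3: Rouch\'e in $\C^2$.} Take a small closed polydisc (or ball) $K$ around $(0,v_0)$ on which $(0,v_0)$ is the only zero of $G$ and $|G|\ge c>0$ on $\partial K$. By Step~1, $F_N=(F_{1,N},F_{2,N})$ converges uniformly to $G$ on $K$. The multidimensional Rouch\'e theorem (stability of the local degree/Kronecker index of holomorphic maps) then gives, for large $N$, exactly one zero $(\xi_N,v_N)\in K$. Shrinking $K$ along a sequence shows $(\xi_N,v_N)\to(0,v_0)$, so $v_N=3\pi/4+o(1)$. Setting $\tau_N=\tau_N^*+\xi_N/N=\pi/b+O(\log N/N)$ gives $\operatorname{Re}\tau_N>0$. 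Hence $|q_N|=e^{-\operatorname{Re}\tau_N/N}<1$ and $q_N\to\omega$. Proposition~\ref{prop:moving-window-double} then shows that $x_N$ is a multiple zero, so $q_N\in\Spec(\Theta)$.

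\textbf{Main obstacle.} The delicate point is Step~1: the uniform $o(1)$ control of the errors after substituting the $\xi$-window. It requires the uniformity statements of Lemmas~\ref{lem:tail-asymptotics-final} and \ref{lem:cusp-asymptotics-final} on the whole logarithmic window, including the case where $\tau$ lies at distance $O(\log N/N)$ from $\tau_0$. It also requires that the $O(\log N/N)$ drift of $S_b$ is negligible against the bounded tail contribution. I would check that the bound \eqref{eq:F-error-bound} really applies there, with $\Lambda_N$ bounded because $E_r(\tau)\to E_r(\tau_0)\ne0$. Given that, the remaining argument is a soft finite-dimensional perturbation.
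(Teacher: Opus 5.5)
Your proposal is correct and follows essentially the same route as the paper. You substitute $\tau=\tau_N^*+\xi/N$, pass to the limit system $(\sin v+\mu_0e^{-\xi-v},\ \cos v-\mu_0e^{-\xi-v})$, and check that its zero at $(0,v_0)$ is nondegenerate (your determinant $-1$ is the paper's $-2\sin^2 v_0$); you then conclude with the two-variable Rouch\'e/Hurwitz theorem and Proposition~\ref{prop:moving-window-double}. Your explicit remark that $|\Lambda_N|=O(1)$ follows from $|\lambda_N|=O(1)$ because $E_r(\tau)\to E_r(\tau_0)\neq0$ is a small point the paper leaves implicit.
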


\begin{proof}
Let $r$ be chosen as in Lemma~\ref{lem:tail-multiplier-nonzero}, and let $\tau_N^*$ be given by Lemma~\ref{lem:one-variable-matching}.  Put
\[
        \tau=\tau_N^*+\xi/N.
\]
Then \eqref{eq:F1-asym}, \eqref{eq:F2-asym}, \eqref{eq:F-error-bound}, and \eqref{eq:lambda-local} imply that, uniformly for bounded $\xi$ and $v$ near $v_0=3\pi/4$,
\[
        (F_{1,N},F_{2,N})(\tau_N^*+\xi/N,v)
        \longrightarrow
        (F_1^\infty,F_2^\infty)(\xi,v),
\]
where
\[
        F_1^\infty(\xi,v)=\sin v+\mu_0e^{-\xi-v},
        \qquad
        F_2^\infty(\xi,v)=\cos v-\mu_0e^{-\xi-v}.
\]
By the definition of $\mu_0$, the limiting system has a zero at $(\xi,v)=(0,v_0)$.  Its Jacobian matrix there is
\[
        \begin{pmatrix}
        -\mu_0e^{-v_0} & \cos v_0-\mu_0e^{-v_0}\\
        \mu_0e^{-v_0} & -\sin v_0+\mu_0e^{-v_0}
        \end{pmatrix}
        =
        \begin{pmatrix}
        \sin v_0 & 0\\
        -\sin v_0 & -2\sin v_0
        \end{pmatrix},
\]
whose determinant is $-2\sin^2v_0\ne0$.  Therefore the zero is simple.  By the multidimensional Hurwitz theorem, equivalently by the two-variable Rouch\'e theorem on a sufficiently small polydisc around $(0,v_0)$, the exact system
\[
        F_{1,N}(\tau_N^*+\xi/N,v)=0,
        \qquad
        F_{2,N}(\tau_N^*+\xi/N,v)=0
\]
has a solution $(\xi_N,v_N)$ tending to $(0,v_0)$.  Set
\[
        \tau_N:=\tau_N^*+\xi_N/N,
        \qquad
        q_N:=\omega e^{-\tau_N/N},
        \qquad
        u_N:=v_N/N.
\]
Then $H_N(q_N,u_N)=\partial_uH_N(q_N,u_N)=0$ by \eqref{eq:normalized-FN}.  Since $\Re\tau_N=\pi/b+o(1)>0$, one has $|q_N|<1$; and clearly $q_N\to\omega$.  Finally Proposition~\ref{prop:moving-window-double} converts this pair of equations into a genuine double zero $x_N=-q_N^{-N}e^{u_N}$ of $\Theta(q_N,\cdot)$.
\end{proof}

\begin{corollary}[Full unit-circle accumulation]\label{cor:full-unit-circle-accumulation}
Every point of the unit circle is an accumulation point of the spectrum of the partial theta function:
\[
        \overline{\Spec(\Theta)}\cap\T=\T.
\]
\end{corollary}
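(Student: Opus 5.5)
The corollary follows by combining Theorem~\ref{thm:root-unity-lifting-final} with the reduction in Proposition~\ref{prop:boundary-window-reduction}.

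First we check the hypothesis of Proposition~\ref{prop:boundary-window-reduction}. Let \(\omega\) be a root of unity, primitive of order \(b\). Theorem~\ref{thm:root-unity-lifting-final} gives, along an infinite arithmetic progression of integers \(N_s\to\infty\), the following data:
\begin{itemize}
\item parameters \(Q_s=q_{N_s}=\omega e^{-\tau_{N_s}/N_s}\) with \(|Q_s|<1\) and \(Q_s\to\omega\);
\item numbers \(u_s=v_{N_s}/N_s\), which are small because \(v_{N_s}\to 3\pi/4\);
\item the system \(H_{N_s}(Q_s,u_s)=\partial_uH_{N_s}(Q_s,u_s)=0\).
\end{itemize}
This is exactly the hypothesis of Proposition~\ref{prop:boundary-window-reduction}.

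Second, apply the proposition. By Proposition~\ref{prop:moving-window-double}, each \(Q_s\) lies in \(\Spec(\Theta)\). Now fix \(\zeta\in\T\) and choose roots of unity \(\omega_r\to\zeta\). For each \(r\), pick a spectral point \(Q\) within distance \(1/r\) of \(\omega_r\). The resulting sequence converges to \(\zeta\), so \(\zeta\in\overline{\Spec(\Theta)}\).

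Third, the reverse inclusion \(\overline{\Spec(\Theta)}\cap\T\subseteq\T\) is trivial. Together with the previous step this gives the equality \(\overline{\Spec(\Theta)}\cap\T=\T\). We should also note that an accumulation point requires infinitely many distinct spectral points near \(\zeta\). This holds because each spectral point lies strictly inside \(\D\), so a sequence of spectral points converging to \(\zeta\in\T\) cannot be eventually constant.

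There is essentially no obstacle at this stage. All the analytic work was done in Theorem~\ref{thm:root-unity-lifting-final}. The only point worth a remark is that the theorem holds for every order \(b\), so the set of admissible \(\omega\) is all roots of unity, which is dense in \(\T\).
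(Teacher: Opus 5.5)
Your proposal is correct and follows essentially the same route as the paper. The paper also takes the spectral points near each root of unity supplied by Theorem~\ref{thm:root-unity-lifting-final} and uses density of roots of unity in $\T$, choosing $q_j\in\Spec(\Theta)$ with $|q_j-\omega_j|<1/j$; this is the argument packaged in Proposition~\ref{prop:boundary-window-reduction}, which you invoke instead of repeating it. Your extra remark that the limit $\zeta$ is a genuine accumulation point, because $\Spec(\Theta)\subset\D$ keeps the approximants distinct from $\zeta$, is a harmless clarification that the paper leaves implicit.
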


\begin{proof}
Theorem~\ref{thm:root-unity-lifting-final} gives spectral points tending to every root of unity.  Since roots of unity are dense on $\T$, for any $\zeta\in\T$ choose roots of unity $\omega_j\to\zeta$ and then choose $q_j\in\Spec(\Theta)$ with $|q_j-\omega_j|<1/j$.  Then $q_j\to\zeta$.
\end{proof}

\begin{remark}[Density]
The proof is local and produces spectral points in logarithmic boundary windows near roots of unity.  It does not yet count all such points in macroscopic arcs.  The truncation-spectral factor $\Psi_m$ has limiting angular density $d\theta/(2\pi)$ by Theorem~\ref{thm:equidistribution}; the same density is the natural conjecture for the genuine spectral points produced above, but proving a counting theorem would require tracking all branches of the matching equation $\lambda_N(\tau)=\mu$ uniformly in $\arg\omega$ and in the residue class.
\end{remark}

\section{Interior approximation: finite spectra, caustics, and bounded roots}
Let
\[
  \Spec(\Th)=\{q\in\D:\exists z\in\C,\ \Th(q,z)=\partial_z\Th(q,z)=0\}
\]
be the true complex spectrum.  For the ordinary truncation put
\[
  \Th_n(q,z)=\sum_{j=0}^{n}q^{j(j+1)/2}z^j,
  \qquad D_n^{\tr}(q)=\disc_z\Th_n(q,z).
\]
The roots of $D_n^{\tr}$ are branch points of the finite algebraic curve $\Th_n(q,z)=0$ under the projection to the $q$-plane.  They should not be confused with $\Spec(\Th)$.

The reason is simple but important.  The convergence $\Th_n\to\Th$ is uniform only on compact subsets of $\D\times\C$.  Thus it controls double roots whose $z$-coordinates remain bounded, but it says nothing directly about collisions escaping to $|z|=\infty$.  Such escaping collisions can produce visible arcs or caustics in finite discriminant pictures.

\begin{proposition}[bounded double roots converge to true spectral values]\label{prop:bounded}
Let $n_k\to\infty$, $q_k\to q_*$ with $|q_*|<1$, and suppose that $z_k$ is a double zero of $\Th_{n_k}(q_k,\cdot)$.  If $z_k$ remains bounded, then, after passing to a subsequence, $z_k\to z_*$ and
\[
  \Th(q_*,z_*)=0,\qquad \partial_z\Th(q_*,z_*)=0.
\]
In particular $q_*\in\Spec(\Th)$.
\end{proposition}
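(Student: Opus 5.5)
This is essentially Lemma~\ref{lem:compact-lifting} with the extraction of a convergent subsequence made explicit. The plan is to reduce to that lemma and supply the uniform-convergence input from Lemma~\ref{lem:uniform-polydisc}.

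First, since $(z_k)$ is bounded, Bolzano--Weierstrass gives a subsequence along which $z_k\to z_*\in\C$; rename it. Since $q_k\to q_*$ with $|q_*|<1$, fix $\delta$ with $|q_*|<\delta<1$. Then $|q_k|\le\delta$ for all large $k$. Choose $\rho>0$ with $\sup_k|z_k|\le\rho$. All pairs $(q_k,z_k)$ then eventually lie in the polydisc $D(\delta,\rho)$.

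Second, apply Lemma~\ref{lem:uniform-polydisc} on $D(\delta,\rho)$: both $\Th_n\to\Th$ and $\partial_z\Th_n\to\partial_z\Th$ uniformly there. Since $z_k$ is a double zero, $\Th_{n_k}(q_k,z_k)=\partial_z\Th_{n_k}(q_k,z_k)=0$. Hence $|\Th(q_k,z_k)|\le\sup_{D(\delta,\rho)}|\Th_{n_k}-\Th|\to0$, and the same holds for $\partial_z\Th$. Both $\Th$ and $\partial_z\Th$ are continuous on $D(\delta,\rho)$, being uniform limits of polynomials. Passing to the limit therefore gives $\Th(q_*,z_*)=\partial_z\Th(q_*,z_*)=0$. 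Since $|q_*|<1$, this means $q_*\in\Spec(\Th)$ by definition. In fact this step is literally Lemma~\ref{lem:compact-lifting} applied with $f_n=p^\Theta_n$ and $n(m)=n_k$, so one may simply cite it.

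There is no real obstacle. The only point requiring care is that the bounded-root hypothesis is exactly what allows one to confine everything to a single compact polydisc inside $\D\times\C$. Without it, uniform convergence gives no control, which is the escaping-root phenomenon discussed before the statement. A minor remark is that $q_*=0$ is allowed: $\Th(0,z)=1$ has no zeros, so this case cannot actually occur under the hypotheses, and it does not affect the argument.
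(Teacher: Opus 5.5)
Your proof is correct and follows the paper's argument essentially verbatim: confine $(q_k,z_k)$ to a fixed polydisc $|q|\le\delta<1$, $|z|\le R$, use uniform convergence of $\Th_n$ and $\partial_z\Th_n$ there, and pass to the limit. You also spell out the Bolzano--Weierstrass extraction and the reduction to Lemma~\ref{lem:compact-lifting}, and your side remark that $q_*=0$ cannot occur because $\Th(0,z)\equiv1$ is correct.
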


\begin{proof}
Choose $0<\delta<1$ and $R<\infty$ such that eventually $|q_k|\leq\delta$ and $|z_k|\leq R$.  The series defining $\Th$ and $\partial_z\Th$ converge uniformly on the polydisc $|q|\leq\delta$, $|z|\leq R$.  Hence $\Th_{n_k}\to\Th$ and $\partial_z\Th_{n_k}\to\partial_z\Th$ uniformly on this polydisc.  Passing to the limit in
\[
  \Th_{n_k}(q_k,z_k)=0,
  \qquad
  \partial_z\Th_{n_k}(q_k,z_k)=0
\]
gives the claim.
\end{proof}

\begin{proposition}[local persistence of non-degenerate spectral points]\label{prop:persistence}
Let $(q_*,z_*)$ satisfy
\[
  \Th(q_*,z_*)=0,
  \qquad
  \partial_z\Th(q_*,z_*)=0,
  \qquad |q_*|<1.
\]
Assume that the Jacobian of the map
\[
  (q,z)\longmapsto (\Th(q,z),\partial_z\Th(q,z))
\]
is non-zero at $(q_*,z_*)$.  Then, for all sufficiently large $n$, there exists a unique pair $(q_n,z_n)$ near $(q_*,z_*)$ such that
\[
  \Th_n(q_n,z_n)=0,
  \qquad
  \partial_z\Th_n(q_n,z_n)=0,
\]
and $(q_n,z_n)\to(q_*,z_*)$.
\end{proposition}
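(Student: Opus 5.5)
This is a stability statement for a non-degenerate zero of a holomorphic map $\C^2\to\C^2$ under uniform perturbation. I would prove it with the two-variable Rouch\'e (multidimensional Hurwitz) theorem, the same tool used in Theorem~\ref{thm:root-unity-lifting-final}. For uniqueness I would combine the inverse function theorem with uniform closeness of derivatives.

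Set
\[
  F(q,z)=(\Th(q,z),\partial_z\Th(q,z)),\qquad F_n(q,z)=(\Th_n(q,z),\partial_z\Th_n(q,z)).
\]
Fix a closed polydisc $P=\{|q-q_*|\le r,\ |z-z_*|\le r\}$ with $r$ small enough that $|q|\le\delta<1$ on $P$. Lemma~\ref{lem:uniform-polydisc} and Proposition~\ref{prop:bounded} give uniform convergence of $\Th_n$ and $\partial_z\Th_n$ to $\Th$ and $\partial_z\Th$ on $P$. The same Weierstrass-test argument, with factors $j^2$ or $j(j+1)/2\cdot|q|^{-1}$, gives uniform convergence of $\partial_z^2\Th_n$ and $\partial_q\Th_n$, $\partial_q\partial_z\Th_n$. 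Alternatively these follow from Cauchy estimates on a slightly larger polydisc, since all functions are holomorphic in both variables there. Hence $F_n\to F$ in $C^1(P)$.

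Since $\det DF(q_*,z_*)\neq0$, the inverse function theorem lets me shrink $r$ so that three things hold. First, $(q_*,z_*)$ is the only zero of $F$ in $P$. Second, $\|DF(p)-DF(q_*,z_*)\|\le\frac{1}{4}\|DF(q_*,z_*)^{-1}\|^{-1}$ on $P$. Third, $m:=\min_{\partial P}|F|>0$.

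\emph{Existence.} For $n$ large, $\sup_P|F_n-F|<m$. The two-variable Rouch\'e theorem then says $F_n$ and $F$ have the same number of zeros in $P$ counted with multiplicity, namely one. So there is a zero $(q_n,z_n)$. Applying the argument with every smaller radius $r'<r$ shows $(q_n,z_n)\to(q_*,z_*)$.

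\emph{Uniqueness.} For large $n$ the $C^1$ closeness gives $\|DF_n(p)-DF(q_*,z_*)\|\le\frac12\|DF(q_*,z_*)^{-1}\|^{-1}$ on the convex set $P$. Then the map $p\mapsto p-DF(q_*,z_*)^{-1}F_n(p)$ is a contraction, so $F_n$ is injective on $P$. Uniqueness also follows directly from the multiplicity count being one. Equivalently, one can run Newton's method directly.

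The main obstacle is minor: checking uniform convergence of the $q$-derivatives, which is needed for the uniqueness and $C^1$ part. I would handle it by Cauchy estimates on a slightly enlarged polydisc $|q-q_*|\le 2r$, $|z-z_*|\le 2r$, still inside $|q|<1$, where the uniform convergence of Lemma~\ref{lem:uniform-polydisc} already holds. One should also note that $z_n$ genuinely solves the system for $\Th_n$, and that $q_n\ne0$ is automatic since $q_n\to q_*$; if $q_*=0$, then $\Th(0,z)=1$ has no zeros, so this case is vacuous.
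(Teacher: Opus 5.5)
Your proposal is correct and follows essentially the paper's route: the paper invokes local invertibility of $(\Th,\partial_z\Th)$ at the non-degenerate zero together with uniform convergence of $(\Th_n,\partial_z\Th_n)$ on a small polydisc around $(q_*,z_*)$. Your version makes explicit what that one-line argument leaves implicit, namely $C^1$ convergence via Cauchy estimates, the two-variable Rouch\'e theorem for existence, and the contraction estimate for injectivity on a fixed polydisc.
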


\begin{proof}
This is the analytic implicit function theorem applied to the perturbation
\[
  (\Th_n,\partial_z\Th_n)=(\Th,\partial_z\Th)+o(1)
\]
uniformly on a small polydisc around $(q_*,z_*)$.  The non-vanishing Jacobian gives local invertibility, and the uniform convergence gives a unique zero of the perturbed system for all large $n$.
\end{proof}

\begin{remark}
Propositions~\ref{prop:bounded} and~\ref{prop:persistence} give the practical rule used below.  In a fixed disk $|q|\leq r<1$, truncation branch points are useful only after one records the corresponding double root $z$.  Bounded $z$-branches are genuine candidates for the true spectrum; escaping $z$-branches are caustics of the finite approximants.
\end{remark}

\section{The exact even-degree factorization}
For even degrees the ordinary truncation discriminant has a useful structural split.  Let $n=2m$ and put $q=t^2$, $z=t^{-(2m+1)}y$.  Then
\[
  R_m(t,y):=t^{m^2}\Th_{2m}(t^2,t^{-(2m+1)}y)
  =\sum_{j=0}^{2m}t^{(j-m)^2}y^j .
\]
This polynomial is palindromic:
\[
  R_m(t,y)=y^{2m}R_m(t,y^{-1}).
\]
Consequently
\[
  R_m(t,y)=y^m S_m(t,y+y^{-1})
\]
for a polynomial $S_m$ of degree $m$ in the second variable.  The discriminant of a palindromic polynomial gives, up to a non-zero monomial and a numerical constant,
\[
  \disc_y R_m(t,y)=R_m(t,1)R_m(t,-1)\,\disc_u(S_m(t,u))^2 .
\]
Thus the reduced truncation discriminant has the form
\[
  \widehat D^{\tr}_{2m}(q)=C_m(q)B_m(q)^2.
\]
The factor
\[
  C_m(t^2)=R_m(t,1)R_m(t,-1)
\]
corresponds to the central reciprocal positions $y=\pm1$, i.e.
\[
  z=\pm q^{-(2m+1)/2}.
\]
Therefore $C_m$ is an escaping central factor for every fixed $|q|<1$.  It is not a direct approximation to the interior spectrum.  The factor $B_m$ contains the stable approximants to genuine spectral values, but it also contains escaping caustic points.  Hence the algebraic factorization is helpful, but it does not replace the bounded-root test.

\section{A truncation-seeded search in $|q|\leq 0.8$}
The following experiment was designed to avoid over-interpreting finite discriminant pictures.

\begin{enumerate}
\item Compute the reduced truncation discriminants for $n=8,10,12,14$.
\item Keep roots with $|q|<0.82$ and compute a corresponding double root $z$ by solving
\[
  \Th_n(q,z)=0,
  \qquad
  \partial_z\Th_n(q,z)=0.
\]
\item Use these pairs $(q,z)$ as starting points for Newton's method applied to the infinite system
\[
  F_1(q,z)=\Th(q,z)=0,
  \qquad
  F_2(q,z)=\partial_z\Th(q,z)=0.
\]
\item Cluster the converged solutions and retain those with $|q|\leq0.8$.
\end{enumerate}

The Newton method used the analytic Jacobian
\[
\begin{pmatrix}
\partial_q\Th & \partial_z\Th\\
\partial_q\partial_z\Th & \partial_z^2\Th
\end{pmatrix},
\]
where
\[
  \partial_q\Th(q,z)=\sum_{j\geq1}\frac{j(j+1)}2 q^{j(j+1)/2-1}z^j,
\]
\[
  \partial_q\partial_z\Th(q,z)
  =\sum_{j\geq1}j\frac{j(j+1)}2q^{j(j+1)/2-1}z^{j-1},
\]
and
\[
  \partial_z^2\Th(q,z)
  =\sum_{j\geq2}j(j-1)q^{j(j+1)/2}z^{j-2}.
\]
All listed solutions were refined until the maximum of $|\Th(q,z)|$ and $|\partial_z\Th(q,z)|$ was below $10^{-30}$; in practice the residuals were typically far smaller.  Since the largest listed $|z|$ is about $16.2$, the tail of the partial theta series is extremely small for $|q|\leq0.8$ after a moderate number of terms.  This gives a reliable numerical calculation, although not yet a fully rigorous exclusion proof for unlisted points.

\begin{figure}[htbp]
\centering
\includegraphics[width=.82\linewidth]{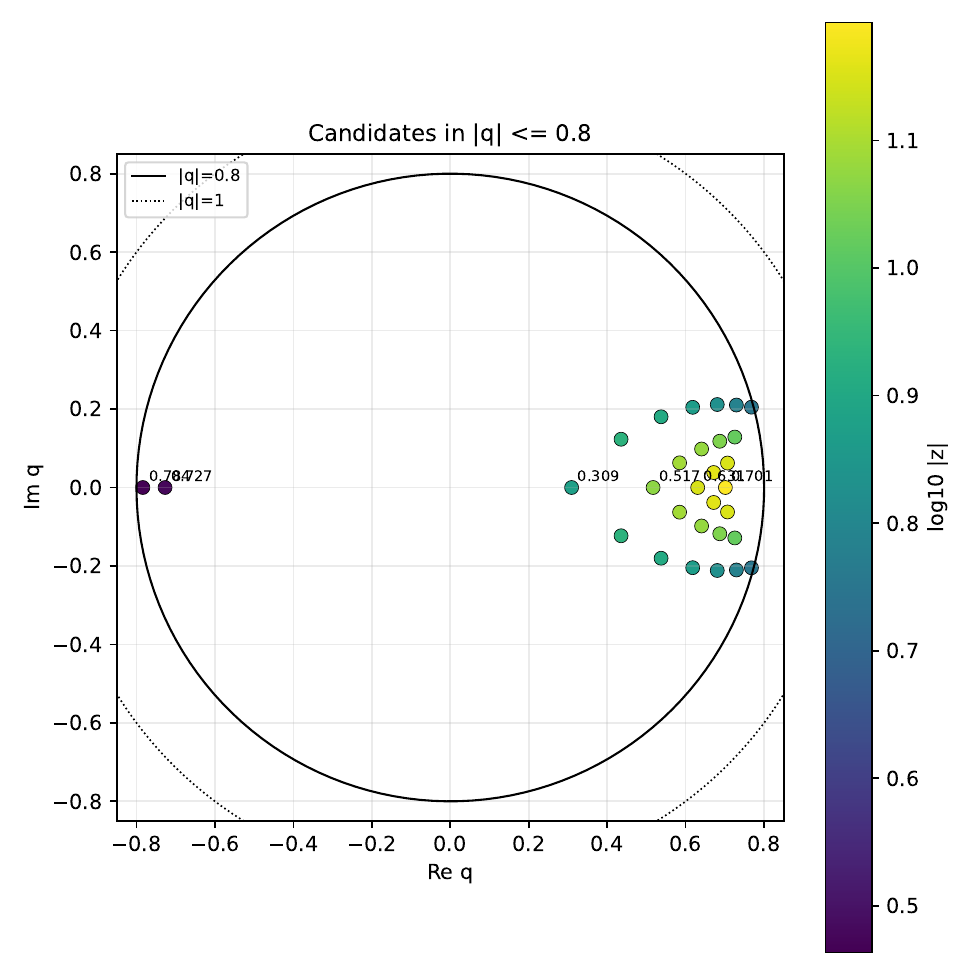}
\caption{Newton-refined spectral candidates of the infinite partial theta function in $|q|\leq0.8$, colored by $\log_{10}|z|$.  The output is discrete.  It does not resemble a circular arc filled densely by spectral points.}
\label{fig:candidates-r08}
\end{figure}

\begin{figure}[htbp]
\centering
\includegraphics[width=.82\linewidth]{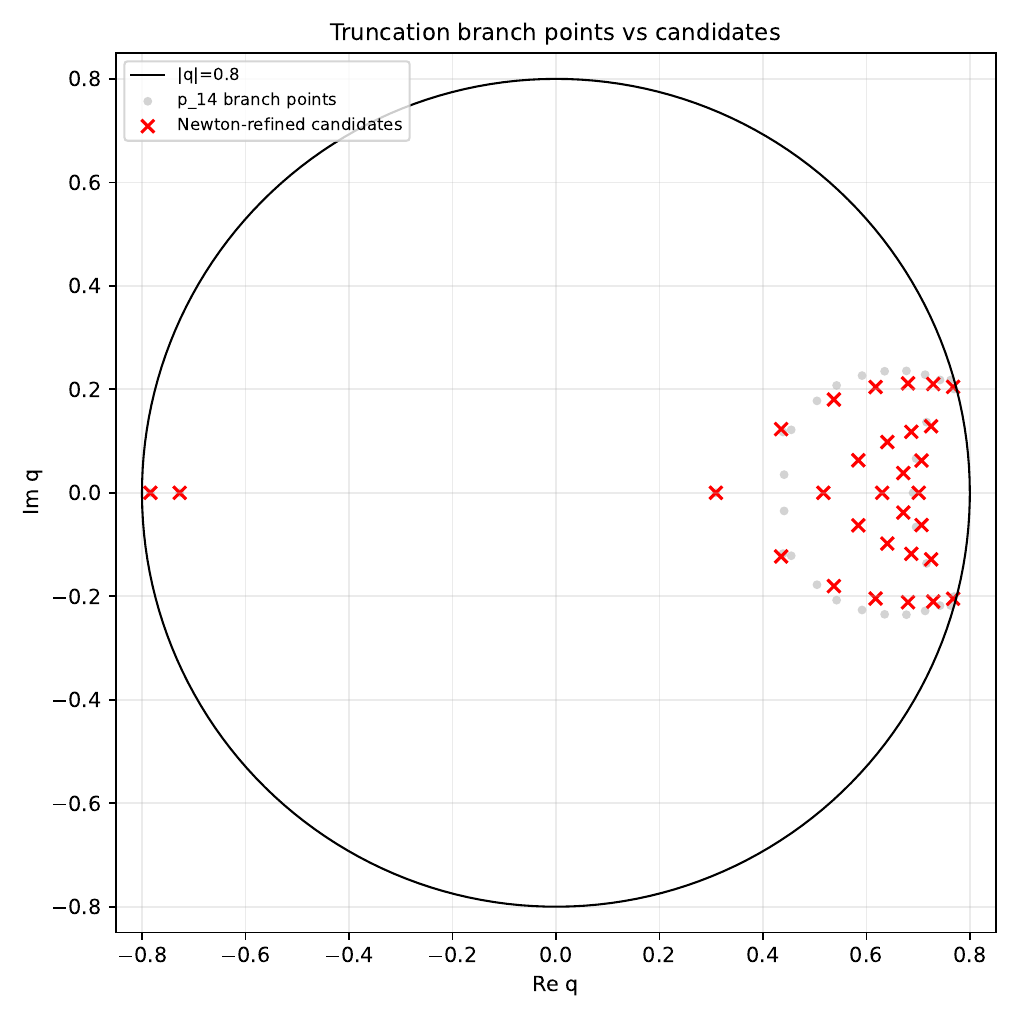}
\caption{Degree $14$ truncation branch points in $|q|\leq0.8$ shown in gray, with Newton-refined infinite spectral candidates shown in red.  Many finite branch points do not refine to distinct nearby points of the true spectrum; they should be interpreted as finite-truncation branch geometry or caustics unless the associated $z$-branches remain controlled.}
\label{fig:trunc-vs-candidates}
\end{figure}

\section{Numerical candidates in $|q|\leq0.8$}
The truncation-seeded Newton refinement produced the following candidate spectral values.  Complex conjugation symmetry is visible, as expected from the real coefficients of the defining equations.

{\scriptsize
\begin{longtable}{rcccc}
\caption{Newton-refined spectral candidates in $|q|\leq0.8$.}\label{tab:r08-candidates}\\
\toprule
No. & $q$ & $|q|$ & corresponding double root $z$ & $|z|$\\
\midrule
\endfirsthead
\toprule
No. & $q$ & $|q|$ & corresponding double root $z$ & $|z|$\\
\midrule
\endhead
1 & $0.309249338600$ & $0.309249338600$ & $-7.503255964244$ & $7.503256$ \\
2 & $0.435318495824-0.123044008552i$ & $0.452373762333$ & $-5.963923719620-6.104775174236i$ & $8.534440$ \\
3 & $0.435318495824+0.123044008552i$ & $0.452373762333$ & $-5.963923719620+6.104775174236i$ & $8.534440$ \\
4 & $0.516959359788$ & $0.516959359788$ & $-11.713168218924$ & $11.713168$ \\
5 & $0.537338919471-0.180327336927i$ & $0.566790140018$ & $-2.468304127681-7.661246127761i$ & $8.049051$ \\
6 & $0.537338919471+0.180327336927i$ & $0.566790140018$ & $-2.468304127681+7.661246127761i$ & $8.049051$ \\
7 & $0.584502393285-0.062921301797i$ & $0.587879356650$ & $-10.648704780600-6.172536070113i$ & $12.308335$ \\
8 & $0.584502393285+0.062921301797i$ & $0.587879356650$ & $-10.648704780600+6.172536070113i$ & $12.308335$ \\
9 & $0.630628316063$ & $0.630628316063$ & $-14.068512932540$ & $14.068513$ \\
10 & $0.640585293973-0.098117777816i$ & $0.648056029351$ & $-7.446041156099-9.311633906595i$ & $11.922670$ \\
11 & $0.640585293973+0.098117777816i$ & $0.648056029351$ & $-7.446041156099+9.311633906595i$ & $11.922670$ \\
12 & $0.617908177237-0.204391067437i$ & $0.650835020528$ & $-0.077489573087-7.339690120098i$ & $7.340099$ \\
13 & $0.617908177237+0.204391067437i$ & $0.650835020528$ & $-0.077489573087+7.339690120098i$ & $7.340099$ \\
14 & $0.671422275226-0.038183681463i$ & $0.672507148810$ & $-13.360596934577-5.523733206370i$ & $14.457426$ \\
15 & $0.671422275226+0.038183681463i$ & $0.672507148810$ & $-13.360596934577+5.523733206370i$ & $14.457426$ \\
16 & $0.686934155443-0.117904588691i$ & $0.696979214861$ & $-4.359842474117-10.346690340867i$ & $11.227744$ \\
17 & $0.686934155443+0.117904588691i$ & $0.696979214861$ & $-4.359842474117+10.346690340867i$ & $11.227744$ \\
18 & $0.701265070083$ & $0.701265070083$ & $-15.578168997259$ & $15.578169$ \\
19 & $0.706704199173-0.062367295983i$ & $0.709450847302$ & $-10.891765132269-9.088484903579i$ & $14.185595$ \\
20 & $0.706704199173+0.062367295983i$ & $0.709450847302$ & $-10.891765132269+9.088484903579i$ & $14.185595$ \\
21 & $0.680536370502-0.211558604123i$ & $0.712661767288$ & $1.337568051901-6.550754439619i$ & $6.685916$ \\
22 & $0.680536370502+0.211558604123i$ & $0.712661767288$ & $1.337568051901+6.550754439619i$ & $6.685916$ \\
23 & $-0.727133325456$ & $0.727133325456$ & $-2.991115175906$ & $2.991115$ \\
24 & $0.725242305555-0.128589594053i$ & $0.736553925701$ & $-1.935836031954-10.299221477311i$ & $10.479572$ \\
25 & $0.725242305555+0.128589594053i$ & $0.736553925701$ & $-1.935836031954+10.299221477311i$ & $10.479572$ \\
26 & $0.729328603046-0.210226851486i$ & $0.759022753484$ & $2.153977346974-5.741122436580i$ & $6.131892$ \\
27 & $0.729328603046+0.210226851486i$ & $0.759022753484$ & $2.153977346974+5.741122436580i$ & $6.131892$ \\
28 & $-0.783742093195$ & $0.783742093195$ & $2.906784175410$ & $2.906784$ \\
29 & $0.767739450253-0.204718120811i$ & $0.794564895061$ & $2.622532669757-5.028188499010i$ & $5.671010$ \\
30 & $0.767739450253+0.204718120811i$ & $0.794564895061$ & $2.622532669757+5.028188499010i$ & $5.671010$ \\
\bottomrule
\end{longtable}
}

The positive real values found in this disk are approximately
\[
  0.309249338600,
  \quad
  0.516959359788,
  \quad
  0.630628316063,
  \quad
  0.701265070083,
\]
and the negative real values found are approximately
\[
  -0.727133325456,
  \quad
  -0.783742093195.
\]
The remaining candidates occur in conjugate pairs.  This list should be interpreted as a high-quality numerical approximation to the portion of $\Spec(\Th)$ detected by truncations up to degree $14$ and then refined against the infinite equations.

\section{Radial monodromy from the small circle}
\label{sec:radial-monodromy}
The numerical monodromy information in a compact subdisk is path-dependent unless the base point and the connecting path are fixed.  The convention used in this section is more intrinsic than the earlier vertical-horizontal convention.  Let
\[
        q_*=|q_*|e^{i\theta}
\]
be a spectral point with \(|q_*|>0.1\).  We start at
\[
        p(q_*):=0.1e^{i\theta}
\]
on the small circle \(|q|=0.1\), label the zeros of \(\Theta(p(q_*),z)\) by increasing modulus, and continue the roots along the straight radial segment from \(p(q_*)\) to \(q_*\).  Equivalently, for small \(|q|\) on that ray the label is characterized by
\[
        z_k(re^{i\theta})=-r^{-k}e^{-ik\theta}(1+O(r)),
        \qquad r=0.1.
\]
The disk \(|q|<0.1\) is numerically free of spectral values, and in this disk the roots are well separated in modulus, so this gives a stable initial labelling.  For a negative real spectral point we use the intrinsic negative base point
\[
        p(q_*)=-0.1,
\]
not a path from \(0.1\) through the origin.  This is essential, since \(q=0\) is a completely degenerate limiting point for the root labelling.

The computation was carried out as follows.  For a listed spectral point \((q_*,z_*)\), a point \(q\) very close to \(q_*\) on the incoming radial segment was chosen, the two local roots near \(z_*\) were separated using the square-root approximation coming from
\[
        \Theta(q,z)=0,
        \qquad
        \Theta(q_*,z_*)=\Theta_z(q_*,z_*)=0,
\]
and the two branches were continued backwards to \(p(q_*)\).  The resulting endpoints were matched with the small-circle roots.  As a check, the same labels were obtained by forward continuation of the relevant small-circle roots for the points displayed below.  The local singularities are simple in all tested cases, i.e.
\[
        \Theta_q(q_*,z_*)\neq0,
        \qquad
        \Theta_{zz}(q_*,z_*)\neq0,
\]
so the local monodromy is a transposition.  The table records which two small-circle labels collide.

\begin{center}
\footnotesize
\begin{tabular}{@{}c l l c@{}}
\toprule
point & \(q_*\) & double root \(z_*\) & radial collision labels\\
\midrule
positive real & $0.309249$ & $-7.503256$ & $(1\;2)$\\
positive real & $0.516959$ & $-11.713168$ & $(3\;4)$\\
positive real & $0.630628$ & $-14.068513$ & $(5\;6)$\\
positive real & $0.701265$ & $-15.578169$ & $(7\;8)$\\
\midrule
upper half-plane & $0.435318+0.123044i$ & $-5.963924+6.104775i$ & $(2\;3)$\\
upper half-plane & $0.537339+0.180327i$ & $-2.468304+7.661246i$ & $(3\;4)$\\
upper half-plane & $0.584502+0.062921i$ & $-10.648705+6.172536i$ & $(4\;5)$\\
upper half-plane & $0.640585+0.098118i$ & $-7.446041+9.311634i$ & $(5\;6)$\\
upper half-plane & $0.617908+0.204391i$ & $-0.077490+7.339690i$ & $(3\;5)$\\
upper half-plane & $0.680536+0.211559i$ & $1.337568+6.550754i$ & $(3\;6)$\\
upper half-plane & $0.729329+0.210227i$ & $2.153977+5.741122i$ & $(3\;7)$\\
upper half-plane & $0.767739+0.204718i$ & $2.622533+5.028188i$ & $(2\;8)$\\
\midrule
negative real & $-0.727133$ & $-2.991115$ & $(2\;4)$\\
negative real & $-0.783742$ & $ 2.906784$ & $(3\;5)$\\
\bottomrule
\end{tabular}
\end{center}

The first two non-real dominating points have the same labels as in the vertical-horizontal experiment, namely \((2\;3)\) and \((3\;4)\).  Starting with the next dominating points the radial convention gives different labels: for instance
\[
        0.617908177237+0.204391067437i
        \quad\leadsto\quad (3\;5),
\]
and
\[
        0.680536370502+0.211558604123i
        \quad\leadsto\quad (3\;6).
\]
This is not a contradiction; it shows that the global identification of sheets depends on the path used to reach the local branch point.  The radial convention is preferable for a global organization of the disk because it always begins from the same small circle \(|q|=0.1\), where the ordering by modulus is canonical.

\begin{figure}[htbp]
\centering
\includegraphics[width=.90\linewidth]{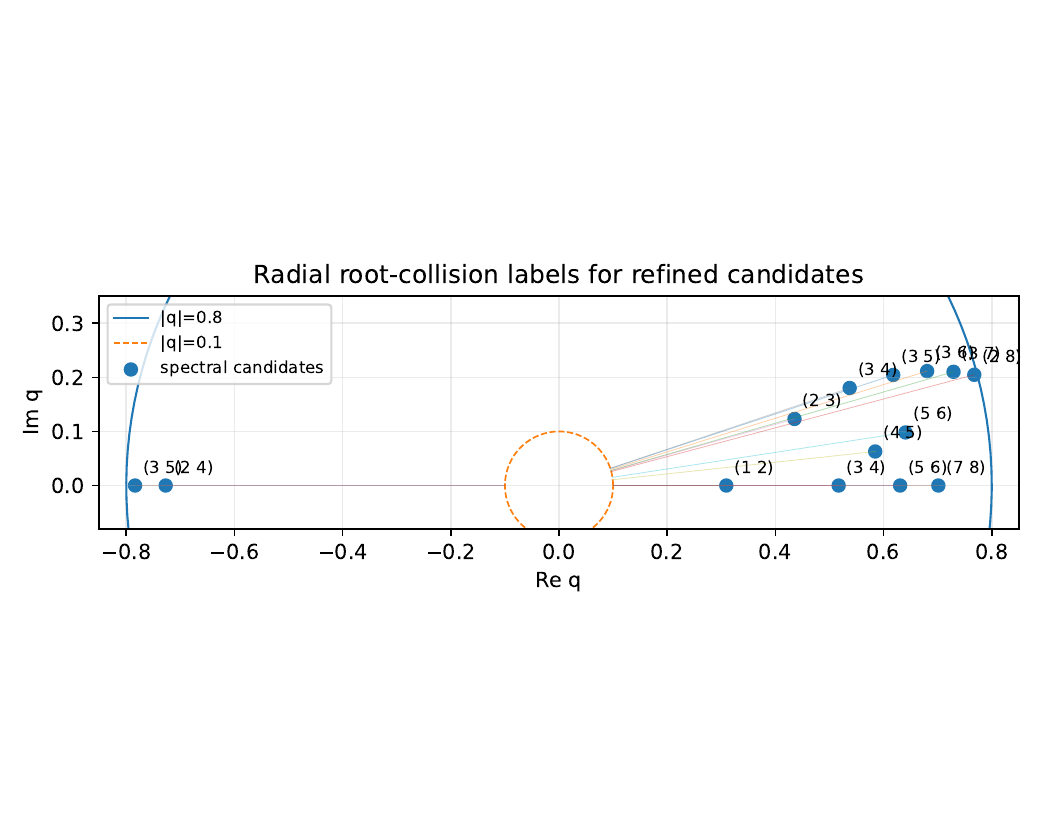}
\caption{Radial root-collision labels for the refined candidates in \(|q|\leq0.8\).  For each displayed point \(q_*\), the base point is \(0.1q_*/|q_*|\), except on the negative real axis where the base point is \(-0.1\).  The label attached to a point indicates which two roots at the corresponding small-circle base point collide when continued radially to \(q_*\).}
\label{fig:radial-monodromy}
\end{figure}

The positive real entries agree with the known real-root theory: along \((0,1)\) the positive real spectral values correspond to consecutive pairs
\[
        (1\;2),\quad (3\;4),\quad (5\;6),\quad (7\;8),\ldots .
\]
These statements should be compared with the work of Kostov--Shapiro and Kostov on the real spectrum and double zeros \cite{KostovShapiro,KostovSpectrum,KostovAsymptotics,KostovDoubleZeros}.  On the negative real axis the small-negative-
\(q\) labelling alternates signs:
\[
        z_k(-r)=(-1)^{k+1}r^{-k}(1+O(r)).
\]
Thus \((2\;4)\) is the collision of the first two negative roots at the base point \(-0.1\), while \((3\;5)\) is the collision of the first two positive roots.  This is why the negative real labels differ by two rather than by one.

\section{Rational directions and radial monodromy}
\label{sec:rational-radial}
The same radial convention suggests a more general rule near rational directions.  Let
\[
        \theta=2\pi\frac{a}{b},\qquad (a,b)=1,
\]
and consider the small base point \(p=0.1e^{i\theta}\).  The small-
\(q\) asymptotics along this ray are
\[
        z_k(re^{i\theta})\sim -r^{-k}e^{-ik\theta}.
\]
Hence the arguments of the leading terms depend only on \(k\bmod b\).  The roots split into \(b\) angular classes, and inside each class the natural radial ordering is
\[
        r_0,\quad r_0+b,\quad r_0+2b,\quad\ldots .
\]
This gives the following radial analogue of the earlier rational-angle heuristic.

\begin{problem}[radial monodromy near a rational direction]
Fix a rational direction \(e^{2\pi ia/b}\).  For spectral points approaching this direction from inside the unit disk, label the roots at the small base point \(0.1e^{2\pi ia/b}\) by increasing modulus.  Is the spectrum near this direction organized into collections indexed by residue classes modulo \(b\), with the simplest local monodromies
\[
        \bigl(r+mb,\; r+(m+1)b\bigr),
        \qquad m=0,1,2,\ldots ?
\]
For \(b=1\) this gives the adjacent positive-real pairs \((1\;2),(3\;4),(5\;6),\ldots\).  For \(b=2\), i.e. the negative real direction, it gives the parity-preserving pairs \((2\;4),(3\;5),\ldots\) after choosing the base point \(-0.1\).  For \(b=4\), i.e. the two imaginary directions, it predicts pairs whose labels differ by four, such as \((1\;5),(2\;6),(3\;7),(4\;8)\), in the corresponding residue-class collections.
\end{problem}

\begin{figure}[htbp]
\centering
\includegraphics[width=.90\linewidth]{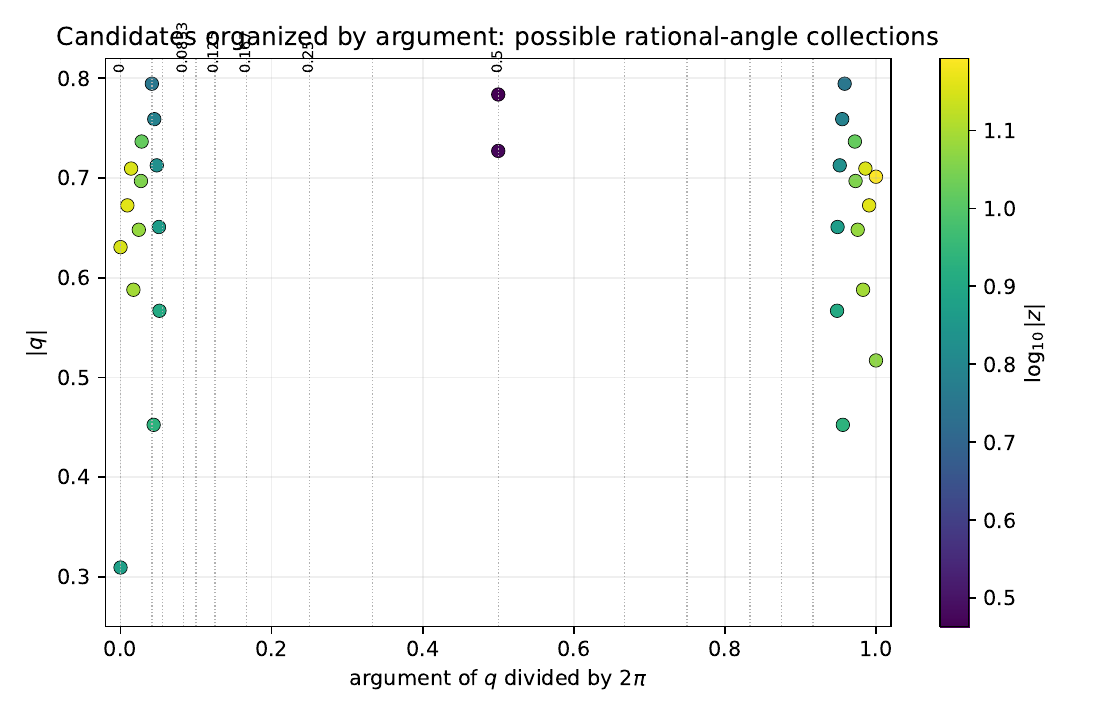}
\caption{The refined candidates plotted by argument and modulus.  Dotted vertical guide lines mark several rational arguments.  The present \(|q|\leq0.8\) list contains no candidates close to the imaginary axis; its non-real candidates remain comparatively close to the positive real direction.  Thus the predictions for directions such as \(\arg q=\pm\pi/2\) should be tested using candidates closer to the unit circle.}
\label{fig:argument-collections}
\end{figure}

The last point is important.  In the current \(|q|\leq0.8\) table, the largest arguments of upper-half-plane candidates are only about
\[
        0.32\text{ radians }\approx 18.5^\circ,
\]
so the data do not yet seriously test the imaginary direction.  The farthest-from-real points available in this table already show that radial monodromy need not be adjacent in the original positive-real ordering: the points
\[
\begin{aligned}
0.617908177237+0.204391067437i&\leadsto (3\;5),\\
0.680536370502+0.211558604123i&\leadsto (3\;6),\\
0.729328603046+0.210226851486i&\leadsto (3\;7),\\
0.767739450253+0.204718120811i&\leadsto (2\;8).
\end{aligned}
\]
A meaningful numerical test near \(\arg q=\pi/2\) will require spectral candidates much closer to \(|q|=1\), probably generated from the boundary-window equations of Sections~5--7 rather than from low-degree truncation seeds.

\begin{problem}[near-imaginary numerical test]
Use the moving-window system
\[
        H_N(q,u)=\partial_uH_N(q,u)=0
\]
with \(q\) in a boundary layer near \(i\) or \(-i\) to compute genuine spectral points of \(\Theta\) with arguments close to \(\pm\pi/2\).  The radial residue-class heuristic predicts that the first observable monodromies should exchange root labels differing by four.
\end{problem}

\section{Interpretation}
The computation changes the interpretation of the finite pictures in an important way.

First, the true spectrum inside $|q|\leq0.8$ appears as a discrete finite set of points, not as arcs.  This is consistent with the general analytic expectation that the spectrum in the open unit disk is discrete.  The finite truncation spectra, by contrast, contain many more branch points, some arranged in visually coherent curves.

Second, the ordinary truncations give a concrete diagnostic missing from plots of discriminant roots alone.  A point $q$ should not be judged by its position in the $q$-plane only; one must compute the corresponding double root $z$.  If $z$ remains bounded along an increasing-degree sequence, Proposition~\ref{prop:bounded} makes the point a genuine spectral candidate.  If $z$ escapes, the point belongs to a finite-degree caustic.

Third, the factorization
\[
  \widehat D_{2m}^{\tr}(q)=C_m(q)B_m(q)^2
\]
explains why a clean factor split does not automatically identify the true spectrum.  The factor $C_m$ is essentially central and escaping.  The factor $B_m$ contains genuine bounded approximants, but it also contains off-central escaping collisions.  The true/caustic separation is therefore not a factorization problem alone; it is a problem in the joint $(q,z)$-space.

\section{Toward a certified theorem in $|q|\leq0.8$}
The present computation suggests the following certification program.

\begin{problem}[certification in a fixed disk]
Prove that the table above gives all points of $\Spec(\Th)$ in $|q|\leq0.8$.
\end{problem}

A possible route is as follows.

\begin{enumerate}
\item Use interval Newton or Krawczyk operators in $\C^2$ to enclose each listed solution $(q,z)$ in a small polydisc containing a unique solution of
\[
  \Th(q,z)=\partial_z\Th(q,z)=0.
\]
\item Choose $R>0$ large enough and use a box subdivision of
\[
  |q|\leq0.8,
  \qquad
  |z|\leq R
\]
combined with interval estimates to exclude all other bounded solutions.
\item Prove an a priori exclusion for $|z|>R$ when $|q|\leq0.8$, or else show that any such solutions would have to arise as limits of an explicitly described escaping caustic family.
\end{enumerate}

The third step is the mathematically deepest one.  Without it, one obtains a reliable list of bounded-root spectral candidates, but not a rigorous proof of completeness in the disk.  Nevertheless, the truncation-seeded Newton refinement already gives a much more faithful picture of the true spectrum than raw discriminant plots of truncations or Jensen polynomials.

\section{Interior conclusions and certification problem}
For a fixed subdisk such as $|q|\leq0.8$, the safest approximation to the spectrum of the partial theta function is not the full spectrum of truncations or Jensen polynomials.  The correct numerical object is the subset of finite branch points whose associated double roots in the $z$-plane remain bounded and which refine to solutions of the infinite equations.

Using this procedure with ordinary truncations up to degree $14$ gives the explicit list above.  The result supports the skepticism about circle-like arcs: such arcs appear naturally in finite discriminant pictures, but after bounded-root filtering and Newton refinement they do not survive as arcs of the true spectrum inside $|q|\leq0.8$.

\section{Compatibility of the two pictures}
The boundary and interior results should be read together.  The unit-circle theorem says that every point of \(\partial\D\) is an accumulation point of spectral values.  This does not imply, and is not meant to imply, that the spectrum fills arcs inside \(\D\).  In compact subsets of the open disk, the spectrum is locally finite.  Finite truncations and Jensen polynomials nevertheless show arc-like branch loci, because their discriminants record both bounded collisions, which may converge to true spectral points, and escaping collisions, which form caustics in the \(q\)-plane.

Thus the correct principle is: in the boundary problem one uses scaling limits in which the double root escapes in a controlled way, while in the fixed-subdisk problem a finite branch point is useful only after one computes and controls the corresponding double root.  This reconciles the two sets of figures and gives a coherent project: prove unit-circle accumulation, and certify finite lists of interior spectral points by bounded-root and interval/Rouch\'e methods.

\section*{Acknowledgements}
The author thanks Vladimir P. Kostov and Jens Forsg\aa rd for many discussions and for their earlier work and notes on the partial theta function, truncations, Jensen polynomials, and related spectra. I am additionally grateful to Professor Kostov for sharing with me his text \cite{GatiKostov} prior to its publication.

\end{document}